\documentclass[a4paper,11pt,reqno]{amsart}
\usepackage{amsmath}
\usepackage{amsthm,enumerate}

\usepackage{graphicx}
\usepackage{amssymb}

\usepackage{appendix}






\usepackage[italian,english]{babel}

\selectlanguage{english}

\usepackage[utf8x]{inputenc}
\usepackage{fancyhdr}

\usepackage{calc}
\usepackage{url}

\usepackage[text={6.3in,8.6in},centering]{geometry}

\usepackage{srcltx, inputenc}

\usepackage[T1]{fontenc}


\usepackage[usenames,dvipsnames]{color}

\fancyhf{}

\makeatletter
\def\cleardoublepage{\clearpage\if@twoside \ifodd\c@page\else%
         \hbox{}%
     \thispagestyle{empty}
     \newpage%
     \if@twocolumn\hbox{}\newpage\fi\fi\fi}
\makeatother

\hyphenation{par-ti-cu-lar}

\let\cleardoublepage\clearpage

\addto\captionsitalian{}

\newtheorem{thm}{Theorem}[section]
\newtheorem{cor}[thm]{Corollary}

\newtheorem{pro}[thm]{Proposition}
\newtheorem{den}[thm]{Definition}
\newtheorem{oss}[thm]{Remark}
\numberwithin{equation}{section}



\hfuzz=3pt

\begin{document}

\title[]{Nonlinear characterizations\\ of stochastic completeness}

\author{Gabriele Grillo}
\address{\hbox{\parbox{5.7in}{\medskip\noindent{Dipartimento di Matematica,\\
Politecnico di Milano,\\
   Piazza Leonardo da Vinci 32, 20133 Milano, Italy.
   \\[3pt]
        \em{E-mail address: }{\tt
          gabriele.grillo@polimi.it
          }}}}}

\author{Kazuhiro Ishige}
\address{\hbox{\parbox{5.7in}{\medskip\noindent{Graduate School of Mathematical Sciences,\\
The University of Tokyo,\\
  3-8-1 Komaba, Meguro-ku, Tokyo 153-8914, Japan. \\[3pt]
        \em{E-mail address: }{\tt
          ishige@ms.u-tokyo.ac.jp}}}}}

\author[Matteo Muratori]{Matteo Muratori}
\address{\hbox{\parbox{5.7in}{\medskip\noindent{Dipartimento di Matematica,\\
Politecnico di Milano,\\
   Piazza Leonardo da Vinci 32, 20133 Milano, Italy. \\[3pt]
        \em{E-mail address: }{\tt
          matteo.muratori@polimi.it}}}}}

%
%
%
%

%
%
%
%


\keywords{Uniqueness; fast diffusion; stochastic completeness}

\subjclass[2010]{Primary: 35R01. Secondary: 58J35, 35K67.}

\maketitle

\begin{abstract}
We prove that conservation of probability for the free heat semigroup on a Riemannian manifold $M$ (namely stochastic completeness), hence a linear property, is equivalent to uniqueness of positive, bounded solutions to nonlinear evolution equations of fast diffusion type on $M$ of the form $u_t=\Delta \phi(u)$, $\phi$ being an arbitrary concave, increasing positive function, regular outside the origin and with $\phi(0)=0$. Either property is also equivalent to nonexistence of nonnegative, nontrivial, bounded solutions to the elliptic equation $\Delta W=\phi^{-1}(W)$, with $\phi$ as above. As a consequence, explicit criteria for uniqueness or nonuniqueness of bounded solutions to fast diffusion-type equations on manifolds are given, these being the first results on such issues.

\vskip8pt
\noindent\textsc{Résumé}. On montre que la conservation de la probabilité pour le semi-groupe libre de la chaleur sur une variété Riemannienne $M$ (c'est à dire le fait que $M$ soit complète au sens stochastique), donc une propriété purement linéaire, est équivalente à l'unicité de solutions bornées et positives d'équations d'évolution non linéaires de type diffusion rapide de la forme $u_t=\Delta \phi(u)$, où $\phi$ est une fonction arbitraire concave, croissante, positive, régulière en dehors de l'origine et satisfaisante $ \phi(0)=0 $. Les deux propriétés sont aussi équivalentes à la non existence de solutions bornées, positives et non triviales de l'équation elliptique $\Delta W=\phi^{-1}(W)$, où $\phi$ est comme ci-dessus. Par conséquent, on peut donner des critères explicites pour l'unicité ou la non unicité de solutions bornées d'équations de type diffusion rapide sur des variétés, ces derniers étant les premiers résultats concernant tels problèmes.
\end{abstract}

\section{Introduction}

Let $M$ be a connected, noncompact Riemannian manifold of dimension $N\ge2$. We recall that $M$ is said to be stochastically complete if the lifetime of Brownian paths on $M$ is a.s.~infinite or, in analytical terms, if the free heat semigroup preserves the identity in the sense that
\[
\int_Mp(t,x,y)\,{d}y=1
\]
for all $(x,t)\in M\times(0,+\infty)$, where $p$ is the (minimal) heat kernel on $M$ and ${d}y$ denotes the Riemannian measure on $M$. Note that $M$ need not be geodesically complete.

A number of analytic and/or geometric conditions ensuring stochastic completeness, or incompletess, of $M$, are by now available, and it would be hopeless to give a full list of the related literature, for which we refer to the comprehensive discussions provided by Grigor'yan e.g.~in \cite{GrigHK, Grig, GrigBams}, but we mention at least that either curvature conditions, volume growth properties and function theoretic conditions, possibly originating from different contexts but applicable to the present problem as well, can be successfully used to verify stochastic completeness, see e.g.~\cite{A, D, G, GrigTo, H, I1, I, IM, K, M, M2, PRS, PRS2, S, T, Y}. To give a flavour of the curvature conditions that guarantee stochastic completeness, one can restrict the attention to Cartan-Hadamard manifolds and notice that qualitatively, in such situation, stochastic completeness holds when curvature does not diverge to minus infinity faster than quadratically, and it does not hold otherwise (see e.g.~\cite{M} for a precise statement).

Of particular interest for the present discussion is the well-known fact that stochastic completeness is related, in fact equivalent, to two other analytic properties. In fact, having fixed two positive constants $\lambda, T$ and a \it bounded \rm function $u_0$, the following three properties turn out to be equivalent (see \cite[Theorem 8.18]{GrigHK}):
\begin{itemize}

\item $M$ is stochastically complete;
\item The equation $\Delta v=\lambda v$ does not admit any nonnegative, nontrivial, bounded solutions;
\item The Cauchy problem
\begin{equation}\label{heat}
\begin{cases}
u_t = \Delta u & \text{in} \ M \times[0,T] \, , \\
u(\cdot,0)=u_0 & \text{in} \ M \, ,
\end{cases}
\end{equation}
has a unique nonnegative solution in $L^\infty(M\times(0,T))$ for $u_0\in L^\infty(M)$ with $u_0\ge 0$.
\end{itemize}

Uniqueness, for bounded data, of solutions to the heat equation on Riemannian manifolds is therefore another form of stochastic completeness, and is in particular known to hold under some clear geometric assumptions, see e.g.~\cite{IM, M} or Corollary \ref{compl} below.

In the recent years, the issue of uniqueness for \it nonlinear \rm evolution equations posed on Riemannian manifolds has been the object of some investigation. In particular, the \it porous medium equation \rm (see \cite{V} for a thorough discussion of this equation in the Euclidean case)
\begin{equation}\label{pme}
\begin{cases}
u_t=\Delta u^m & \text{in } M\times[0,T] \, , \\
u(\cdot,0) = u_0  & \text{in } M \, ,
\end{cases}
\end{equation}
where one has to assume that $m>1$ so that the diffusion coefficient $mu^{m-1}$ vanishes when $u$ does, has been investigated e.g.~in \cite{GMP, GMP2, GMV}. Among other issues, uniqueness results for suitable classes of data, possibly requiring appropriate curvature bounds, have been proved in these papers.

Much less is known so far on \eqref{pme} in the \it singular \rm case $m<1$, which goes under the name of \it fast diffusion equation \rm since the diffusion coefficient $mu^{m-1}$ diverges  when $u$ vanishes, thus forcing infinite speed of propagation and, in fact, even the possibility that solutions vanish in finite time. This has been indeed proved to happen, for suitable data, when $M$ admits a spectral gap (i.e.~$\min \sigma(-\Delta)>0$, where $\sigma(-\Delta)$ is the $L^2$ spectrum of $-\Delta$), which is the case for instance on Cartan-Hadamard manifolds when Sec$\,\le-k$ for a suitable $k>0$ (see \cite{Mc}), and in particular on the hyperbolic space (see \cite{BGV}). It has to be noted that, in the Euclidean case $M=\mathbb{R}^N$, \emph{existence and uniqueness} of solutions to \eqref{pme} hold instead even when $u_0$ is merely required to be \it locally integrable\rm, see \cite{HP}. Little seems to be known in this connection on noncompact manifolds other than $\mathbb{R}^N$. Uniqueness for \it strong \rm solutions is presently known only when curvature is allowed to be negative, but decaying to zero at least quadratically, see \cite{BS}.

We shall establish here that equivalence between stochastic completeness, a concept a priori related to a \it linear \rm evolution on $M$, and uniqueness of nonnegative, bounded solutions to \eqref{pme} is still valid in the fast diffusion range $m<1$. We devote Section \ref{case-fde} to a deeper discussion of this quite significant case. In fact we shall prove a more general result, namely that the above mentioned equivalence holds if \eqref{pme} is replaced by
\begin{equation}\label{pmephi}\begin{cases}
u_t=\Delta \phi(u) & \text{in } M\times(0,T) \, , \\
u(\cdot, 0)=u_0 & \text{in } M \, ,
\end{cases}
\end{equation}
where $\phi$ is an \it arbitrary \rm concave, increasing positive \rm function, regular outside the origin and satisfying $\phi(0)=0$.  This provides, thanks to \cite[Theorem 8.18]{GrigHK}), what is in our view a rather unexpected connection between properties of \it linear \rm and \it nonlinear \rm evolutions on manifolds. As a byproduct, one gets immediately that uniqueness of nonnegative bounded solutions to the nonlinear evolution \eqref{pmephi} for a given concave, increasing, positive and regular outside the origin structure function $\phi$ (including the linear case $\phi(x)=x$) is \it equivalent \rm to the same property for the same problem associated with \it any \rm such structure function $\phi$, a fact that we find remarkable. We stress that, to our knowledge, these are the first results available in such a general framework.

Problem \eqref{pmephi} is deeply related to the semilinear elliptic equation
\begin{equation}\label{elliptic-phi}
\Delta W=\phi^{-1}(W) \qquad \text{in } M \, .
\end{equation}
Indeed, the existence of a nonnegative, nontrivial, bounded solution to \eqref{elliptic-phi} is also equivalent to stochastic incompleteness, or analogously, stochastic completeness is equivalent to the fact that $ W \equiv 0 $ is the only nonnegative, nontrivial, bounded solution to \eqref{elliptic-phi}. This elliptic result is a byproduct of our methods of proof (which in particular do not exploit any kind of continuity of solutions), even though in the specific case of the Laplace-Beltrami operator was basically already known: see Remark \ref{rem-ell} below.





\subsection{Statement of the main result}

We denote by $ \mathfrak{C} $ the class of all functions $ \phi: \mathbb{R}^+ \to \mathbb{R}^+ $ satisfying the following assumptions:
\begin{gather*} \label{g1}
\phi \ \text{is \emph{concave}} \, , \\ \label{g2}
\phi \ \text{is \emph{strictly increasing} with} \ \phi(0)=0 \, , \\ \label{g3}
\phi \in C(\mathbb{R}^+) \cap C^1(\mathbb{R}^+ \setminus \{ 0 \}) \, .
\end{gather*}

Let $ M $ be a connected, noncompact Riemannian manifold. For any $ T \in (0,+\infty] $, any nonnegative $ u_0 \in L^\infty(M) $ and any $ \phi \in \mathfrak{C} $, we can consider the nonlinear parabolic problem
\begin{equation}\label{gen-zero-bdd}
\begin{cases}
u_t = \Delta \phi(u) & \text{in } M \times (0,T) \, , \\
u(\cdot,0)=u_0  & \text{in } M  \, ,
\end{cases}
\end{equation}
which always has an $ L^\infty(M\times(0,T)) $ nonnegative solution, by standard results. As mentioned above, we shall also be concerned with the problem of existence of nonnegative, nontrivial, bounded solutions to the semilinear elliptic equation \eqref{elliptic-phi}. For the precise meaning of solution to \eqref{elliptic-phi} and \eqref{gen-zero-bdd}, we refer to Section \ref{preliminaries}.

\medskip

We are now ready to state the main result of this paper.

\begin{thm}\label{main-teo}
Let $ M $ be a connected, noncompact Riemannian manifold. Let $ \phi \in \mathfrak{C} $, $ T \in (0,+\infty] $ and $ u_0 \in L^\infty(M)$, with $ u_0 \ge 0 $. Then the following assertions are equivalent:
\begin{enumerate}[(a)]
\item $ M $ is stochastically incomplete. \label{a}
\item The Cauchy problem \eqref{gen-zero-bdd} admits at least two nonnegative solutions in $ L^\infty(M\times(0,T)) $. \label{b}
\item The semilinear elliptic equation \eqref{elliptic-phi} admits a nonnegative, nontrivial, bounded solution. \label{c}
\end{enumerate}
\end{thm}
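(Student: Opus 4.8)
The plan is to establish the cyclic chain of implications (a)$\Rightarrow$(c)$\Rightarrow$(b)$\Rightarrow$(a), the only external ingredient being the \emph{linear} characterization of stochastic completeness recalled in the Introduction, namely \cite[Theorem 8.18]{GrigHK}: $M$ is stochastically incomplete if and only if, for one (equivalently every) $\lambda>0$, there exists a bounded $v$ with $0\le v\le1$, $v\not\equiv0$ and $\Delta v=\lambda v$. The task is thus to \emph{upgrade} this fact from $\phi(x)=x$ to an arbitrary $\phi\in\mathfrak{C}$; the three implications are, respectively, an elliptic sub/supersolution construction, a parabolic comparison construction, and a maximum principle obtained by duality.

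\emph{(a)$\Rightarrow$(c).} Choose $\lambda>0$ with $\lambda\,\phi'(0^{+})>1$ (any $\lambda>0$ is admissible when $\phi'(0^{+})=+\infty$, which is the genuinely singular, fast-diffusion regime), and let $v$ be as above. Since $\phi$ is concave and $\phi(0)=0$, the map $r\mapsto\phi(r)/r$ is nonincreasing with limit $\phi'(0^{+})$ at $0^{+}$; equivalently $\phi^{-1}$ is convex and $\phi^{-1}(r)\le r\,\phi^{-1}(\varepsilon)/\varepsilon$ for $0\le r\le\varepsilon$. Hence, for $\varepsilon>0$ small, $\underline W:=\varepsilon v$ is a bounded subsolution of \eqref{elliptic-phi} — indeed $\Delta\underline W=\varepsilon\lambda v\ge v\,\phi^{-1}(\varepsilon)\ge\phi^{-1}(\underline W)$ as soon as $\phi(\varepsilon\lambda)\ge\varepsilon$, which holds for $\varepsilon$ small by the choice of $\lambda$ — whereas $\overline W:=\varepsilon$ is a supersolution ($\Delta\overline W=0\le\phi^{-1}(\overline W)$), and $\underline W\le\overline W$. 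A monotone iteration between $\underline W$ and $\overline W$, performed at the level of the resolvent of the Dirichlet Laplacian on a compact exhaustion of $M$ (so that no pointwise continuity of the iterates is invoked, which is precisely why the scheme works with the merely $L^{\infty}$ notion of solution adopted here), yields a solution $W$ of \eqref{elliptic-phi} with $\varepsilon v\le W\le\varepsilon$; it is bounded and, since $v\not\equiv0$, nontrivial. This is (c).

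\emph{(c)$\Rightarrow$(b).} Given a nontrivial bounded $W\ge0$ solving \eqref{elliptic-phi}, the time-independent function $\sigma:=\phi^{-1}(W)$ is a nontrivial, bounded \emph{stationary subsolution} of \eqref{gen-zero-bdd}: indeed $\partial_t\sigma=0$ while $\Delta\phi(\sigma)=\Delta W=\phi^{-1}(W)=\sigma\ge0$. Using $\sigma$ together with the scaling property $\phi(\theta s)\ge\theta\phi(s)$ ($\theta\in[0,1]$, $s\ge0$) of a concave $\phi$ with $\phi(0)=0$, one produces, for the prescribed datum $u_0$, a bounded solution $\hat u$ of \eqref{gen-zero-bdd} lying strictly above the minimal solution $\underline u$, obtained as an increasing limit of solutions on an exhaustion of $M$ carrying boundary data comparable to $\sigma$. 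In the model case $\phi(x)=x^{m}$, $m<1$, and $u_0\equiv0$ this is explicit: $\Delta\sigma^{m}=\sigma$ and $\hat u(x,t)=\sigma(x)\,[(1-m)t]^{1/(1-m)}$ solves \eqref{gen-zero-bdd} with datum $0$, while $\underline u\equiv0$. Since $\hat u\not\equiv\underline u$, \eqref{gen-zero-bdd} has at least two nonnegative $L^{\infty}(M\times(0,T))$ solutions, i.e.\ (b).

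\emph{(b)$\Rightarrow$(a), the crux.} We prove the contrapositive: if $M$ is stochastically complete then the bounded nonnegative solution of \eqref{gen-zero-bdd} is unique. Let $u_1,u_2$ be two such solutions (same datum) and set $D(x,t):=\int_{0}^{t}\bigl(\phi(u_1)-\phi(u_2)\bigr)(x,s)\ds s$. Integrating the equation in time gives $\Delta D=u_1-u_2$ and $\partial_tD=\phi(u_1)-\phi(u_2)$; since $\phi$ is strictly increasing this reads $\partial_tD=a\,\Delta D$ with $a:=(\phi(u_1)-\phi(u_2))/(u_1-u_2)$, nonnegative and — because $u_1,u_2$ are bounded and $\phi\in C^{1}(\mathbb{R}^{+}\setminus\{0\})$ is concave — bounded below by a positive constant. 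On each finite time interval $D$ is bounded, with $D(\cdot,0)=0$, so the matter reduces to showing that a bounded solution of the linear equation $\partial_tD=a\,\Delta D$ with zero initial datum vanishes identically. Here stochastic completeness of $(M,g)$ is exploited, via a Khas'minskii/Grigor'yan cut-off argument on an exhaustion — equivalently, by solving the backward adjoint problem with a regularized, truncated coefficient and then letting the truncation go — while the comparison principle for \eqref{gen-zero-bdd} and the construction of the minimal solution are routine. The main obstacle lies exactly in this last step: in the fast-diffusion regime $a$ need not be bounded \emph{above}, as it blows up where the solutions approach $0$, so the transfer of stochastic completeness from $\Delta$ to $a\,\Delta$ must be carried out by an argument robust enough to use no regularity of $a$, hence none of $u_1,u_2$, beyond the stated two-sided bounds.
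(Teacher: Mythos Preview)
Your (a)$\Rightarrow$(c) is fine and in fact more direct than the paper's route through (b); the ordered pair $(\varepsilon v,\varepsilon)$ with $\lambda$ chosen so that $\phi^{-1}(\varepsilon)\le\lambda\varepsilon$ gives the sub/supersolution needed, and the monotone scheme on an exhaustion produces the solution. The other two implications, however, contain genuine gaps.

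\emph{On (b)$\Rightarrow$(a).} You reduce the contrapositive to: on a stochastically complete $M$, every bounded $D$ with $D_t=a\,\Delta D$, $D(\cdot,0)=0$, $a\ge c>0$, must vanish; you then identify the obstruction --- $a$ is unbounded above in the fast-diffusion regime --- and stop. But this is a real obstruction, not a technicality that a ``robust'' duality argument will absorb: stochastic completeness of $(M,\Delta)$ does \emph{not} in general imply uniqueness for $D_t=a\,\Delta D$ when $a$ is unbounded. Already on $\mathbb{R}^N$, $N\ge3$, the diffusion with generator $(1+|x|^2)^2\Delta$ reaches infinity in finite time, so the associated heat equation admits nontrivial bounded solutions with zero initial data; and in your setting $a$ depends on the unknown, merely $L^\infty$ solutions $u_1,u_2$ and blows up wherever both approach zero, so no a priori spatial control on $a$ is available. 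The paper avoids this entirely: instead of attempting uniqueness, it proves (b)$\Rightarrow$(c) by multiplying $u_\ast-u$ ($u$ minimal) by $e^{-t}$, integrating in $t$, and applying Jensen's inequality to the concave $\phi$, producing directly a bounded nontrivial subsolution $\underline W$ of $\Delta\underline W\ge\phi^{-1}(\underline W)$. A second pass of the same device, now against a constant solution, gives (c)$\Rightarrow$(a). No linear parabolic problem with an unbounded coefficient is ever faced.

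\emph{On (c)$\Rightarrow$(b).} The stationary $\sigma=\phi^{-1}(W)$ is indeed a subsolution of $u_t=\Delta\phi(u)$, but with initial value $\sigma$, not $u_0$; it does not by itself yield nonuniqueness for the prescribed datum. Your only concrete construction is the separable one for $\phi(u)=u^m$ and $u_0\equiv0$, and the scaling inequality $\phi(\theta s)\ge\theta\phi(s)$ does not transfer through $\Delta$ to give $\Delta\phi(\theta\sigma)\ge\theta\,\Delta\phi(\sigma)$, so the sentence ``one produces, for the prescribed datum $u_0$, a bounded solution $\hat u$ \dots'' is unsupported. The paper handles arbitrary $\phi$ and $u_0$ (in its (a)$\Rightarrow$(b)) by working instead with the \emph{linear} eigenfunction $v$: it builds $\underline u(x,t)=\phi^{-1}\!\bigl((f(t)v(x)-\varepsilon)\vee0\bigr)$, where $f$ solves an auxiliary ODE and the Kato cut-off forces $\underline u(\cdot,0)\equiv0$, and then tunes the free parameter $\alpha$ in $\Delta v=\alpha v$ so that $\|\underline u(S)\|_\infty>\|u_0\|_\infty$ for some $S<T$. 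This is precisely what separates the resulting solution from the minimal one, for every $u_0$.
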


Theorem~\ref{main-teo} is of course equivalent to the following one, that nevertheless we state separately for the reader's convenience.
\begin{thm}\label{main-teob}
Let $ M $ be a connected, noncompact Riemannian manifold. Let $ \phi \in \mathfrak{C} $, $ T \in (0,+\infty] $ and $ u_0 \in L^\infty(M)$, with $ u_0 \ge 0 $. Then the following assertions are equivalent:
\begin{enumerate}[(a)]
\item[(a')] $ M $ is stochastically complete.
\item[(b')] The Cauchy problem \eqref{gen-zero-bdd} has a unique nonnegative solution in $ L^\infty(M\times(0,T)) $.
\item[(c')] The semilinear elliptic equation \eqref{elliptic-phi} does not admit any nonnegative, nontrivial, bounded solutions. \label{c'}
\end{enumerate}
\end{thm}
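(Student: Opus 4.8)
The plan is to establish the cyclic chain of implications $\eqref{a} \Rightarrow \eqref{c} \Rightarrow \eqref{b} \Rightarrow \eqref{a}$, exploiting throughout the known equivalence (from \cite[Theorem 8.18]{GrigHK}) between stochastic incompleteness of $M$ and the existence of a nonnegative, nontrivial, bounded solution $v$ of the linear elliptic equation $\Delta v = \lambda v$ for some (equivalently, any) $\lambda>0$. The key observation that drives the nonlinear-to-linear bridge is that for $\phi\in\mathfrak{C}$, concavity with $\phi(0)=0$ forces $\phi(s)\ge \phi'(1)\, s$ for $s\le 1$ (after a harmless rescaling we may assume $\phi(1)$ and $\phi'(1)$ are normalized), so that $\phi^{-1}$ is convex, superlinear near $0$ in a controlled way, and sublinear at infinity; this is exactly what makes solutions of $\Delta W = \phi^{-1}(W)$ and of $\Delta v = \lambda v$ comparable via sub/supersolution arguments.

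For $\eqref{a}\Rightarrow\eqref{c}$: given a bounded solution $v>0$ of $\Delta v = \lambda v$ with $0<v\le 1$, I would look for a solution of \eqref{elliptic-phi} of the form $W = \psi(v)$ or, more robustly, construct $W$ by the monotone iteration / Perron method between an explicit subsolution and the supersolution $W\equiv \|\ldots\|$ — concretely, $\varepsilon v$ is a subsolution of \eqref{elliptic-phi} for $\varepsilon$ small provided $\lambda \varepsilon v \le \phi^{-1}(\varepsilon v)$, which holds near $0$ because $\phi^{-1}$ dominates any linear function there (concavity of $\phi$), while a suitable constant is a supersolution; standard elliptic theory on relatively compact exhaustion domains with monotone iteration then yields a nontrivial bounded $W$, and nontriviality is inherited from $\varepsilon v \not\equiv 0$. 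For $\eqref{c}\Rightarrow\eqref{b}$: given such a $W$, I would use it to manufacture a nontrivial bounded solution of \eqref{gen-zero-bdd} with zero initial datum — the standard device is a separable-type supersolution of the form $u(x,t)=\eta(t)\,\Theta(W(x))$ tuned so that $u_t \le \Delta\phi(u)$ and $u(\cdot,0)\equiv 0$ but $u\not\equiv 0$ for $t>0$; adding this to the minimal solution of \eqref{gen-zero-bdd} with datum $u_0$ (or invoking that $0$ and this bump are two distinct solutions when $u_0\equiv 0$, then reducing the general $u_0$ case to $u_0\equiv 0$ by comparison and the fact that the minimal solution is unique) produces two distinct solutions. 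For $\eqref{b}\Rightarrow\eqref{a}$: I argue the contrapositive, i.e.\ stochastic completeness implies uniqueness for \eqref{gen-zero-bdd}; if $u_1,u_2$ are two bounded nonnegative solutions with the same datum, then $w=\phi(u_1)-\phi(u_2)$ satisfies a linear parabolic inequality $w_t = \Delta(a(x,t)\,w)$ with $0\le a \le C$ bounded (since $\phi$ is monotone and its inverse has a positive lower Lipschitz-type bound on bounded sets away from $0$, handled via a De Giorgi–Nash–Moser or duality argument, with the degeneracy at $u=0$ treated by approximation), and stochastic completeness — equivalently uniqueness for the linear heat-type equation with bounded, possibly degenerate coefficients — forces $w\equiv 0$, hence $u_1=u_2$.

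I expect the main obstacle to be the implication $\eqref{b}\Rightarrow\eqref{a}$, specifically making rigorous the comparison/duality argument for $w=\phi(u_1)-\phi(u_2)$: the coefficient $a$ linking $u_1-u_2$ to $\phi(u_1)-\phi(u_2)$ may vanish where the solutions vanish, so one cannot directly quote a standard uniqueness theorem for uniformly parabolic equations. The natural route is to test against the solution of a dual backward problem driven by the linear operator whose stochastic completeness is assumed, cutting off near $u=0$ and near spatial infinity, and passing to the limit using only the $L^\infty$ bounds and the conservation property $\int_M p(t,x,y)\,dy=1$; the care needed is that no continuity of $u_1,u_2$ is available (as the authors emphasize), so all manipulations must be at the level of weak/very weak solutions and integration against $C^\infty_c$ test functions, with the heat-kernel representation supplying the missing regularity. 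The other delicate point, of a more technical nature, is verifying that the sub- and supersolutions in $\eqref{a}\Rightarrow\eqref{c}$ and $\eqref{c}\Rightarrow\eqref{b}$ are admissible in the precise (possibly weak) sense fixed in Section~\ref{preliminaries}, and that the Perron/monotone-iteration construction stays within the class of bounded solutions on the noncompact $M$ without boundary effects — this should follow from an exhaustion argument plus the uniform $L^\infty$ bounds, but the bookkeeping is where the real work lies.
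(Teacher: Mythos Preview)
Your cycle runs $(a)\Rightarrow(c)\Rightarrow(b)\Rightarrow(a)$ in the incompleteness labelling of Theorem~\ref{main-teo}, whereas the paper proves $(a)\Rightarrow(b)\Rightarrow(c)\Rightarrow(a)$; the real divergence is in how you close the loop. Your $(b)\Rightarrow(a)$ (argued as its contrapositive, completeness $\Rightarrow$ uniqueness) via linearization has a genuine gap. With $z=u_1-u_2$ the correct relation is $z_t=\Delta(a\,z)$ where $a=(\phi(u_1)-\phi(u_2))/(u_1-u_2)$, and this $a$ is \emph{not} bounded in general: for $\phi\in\mathfrak{C}$ one may have $\phi'(0^+)=+\infty$ (as in the model case $\phi(u)=u^m$, $m<1$), so $a$ blows up wherever the solutions approach zero. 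Rewriting as $(b\,w)_t=\Delta w$ with $w=\phi(u_1)-\phi(u_2)$ gives a bounded but possibly vanishing weight $b$; either way you are left with a variable-coefficient, degenerate heat-type equation whose $L^\infty$ uniqueness on $M$ is \emph{not} what stochastic completeness of the Laplace--Beltrami operator delivers --- conservation of probability for $e^{t\Delta}$ yields uniqueness for $u_t=\Delta u$, not for $z_t=\Delta(a\,z)$ with unbounded or degenerate $a$. The heat-kernel/duality patch you sketch does not bridge this, and you have no a priori ordering $u_1\ge u_2$ to exploit.

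The paper sidesteps the issue entirely: its crucial step is $(b)\Rightarrow(c)$, carried out without any linearization. Given the minimal solution $u$ and any other bounded solution $u_\ast\ge u$ (the ordering comes for free from minimality), one forms
\[
\underline W(x)=\int_0^T\bigl[\phi(u_\ast(x,s))-\phi(u(x,s))\bigr]\,\hat e(s)\,ds,\qquad \hat e(s)\propto e^{-s},
\]
and shows, via Jensen's inequality (concavity of $\phi$) together with the elementary concavity bound $\phi(p-q)\ge\phi(p)-\phi(q)$, that $\Delta\underline W\ge\phi^{-1}(\underline W)$; monotone iteration between $\underline W$ and a constant then upgrades this subsolution to a solution of \eqref{elliptic-phi}. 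The step $(c)\Rightarrow(a)$ is then comparatively soft (a separable parabolic subsolution built from $W$, followed by a second time-average producing a subsolution of $\Delta V\ge\lambda V$). Two smaller corrections to your other legs: in your $(a)\Rightarrow(c)$ the subsolution condition for $\varepsilon v$ is $\lambda\varepsilon v\ge\phi^{-1}(\varepsilon v)$ (not $\le$), and it holds because $\phi^{-1}$, being convex with $\phi^{-1}(0)=0$, is \emph{sub}linear near $0$, not superlinear --- your two sign errors happen to cancel, but the reasoning as written is wrong. And in your $(c)\Rightarrow(b)$, one cannot ``add'' a zero-data solution to the minimal solution of a nonlinear equation, nor is the reduction from general $u_0$ to $u_0\equiv0$ explained; the paper instead constructs a subsolution that starts at $0$ but grows to exceed $\|u_0\|_\infty$ before time $T$, and uses it (via Dirichlet exhaustion with large, time-increasing boundary data) to build a second bounded solution strictly above the minimal one.
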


As an immediate Corollary of Theorem~\ref{main-teo},
we have the following result, which follows by simply noting that $\phi$, $T$ and $u_0$ are arbitrary in the above statement.

\begin{cor}\label{cor1} Let $ M $ be a connected, noncompact Riemannian manifold. The following statements are equivalent, and each of them is equivalent to stochastic incompleteness (resp.~completeness):
\begin{itemize}
\item For some bounded datum $u_0\ge0$, some function $ \phi \in \mathfrak{C} $ and some $T\in(0,+\infty]$ the Cauchy problem \eqref{gen-zero-bdd} admits at least two nonnegative solutions (resp.~a unique nonnegative solution) in $ L^\infty(M\times(0,T)) $;
\item For all bounded data $u_0\ge0$, all function $ \phi \in \mathfrak{C} $ and all $T\in(0,+\infty]$ the Cauchy problem \eqref{gen-zero-bdd} admits at least two nonnegative solutions (resp.~a unique nonnegative solution) in $ L^\infty(M\times(0,T)) $;
\item For some function $ \phi \in \mathfrak{C} $ the semilinear elliptic equation \eqref{elliptic-phi} admits (resp.~does not admit) a nonnegative, nontrivial, bounded solution;
\item For all functions $ \phi \in \mathfrak{C} $ the semilinear elliptic equation \eqref{elliptic-phi} admits (resp.~does not admit) a nonnegative, nontrivial, bounded solution.
\end{itemize}

\end{cor}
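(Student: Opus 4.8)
The plan is to derive Corollary~\ref{cor1} entirely from Theorem~\ref{main-teo} by a quantifier argument; no new analysis is needed. The key observation I would build on is that assertion~(a) of Theorem~\ref{main-teo}, stochastic incompleteness, is a property of $M$ alone and does not involve the data $\phi\in\mathfrak{C}$, $T\in(0,+\infty]$ or $u_0\in L^\infty(M)$, whereas assertions~(b) and~(c) a priori depend on such a triple. Since Theorem~\ref{main-teo} asserts that \emph{for every admissible $(\phi,T,u_0)$} one has the equivalences $(a)\Leftrightarrow(b)$ and $(a)\Leftrightarrow(c)$, the truth value of~(b) --- and of~(c) --- cannot in fact depend on the chosen triple: it always coincides with the truth value of the fixed proposition~(a). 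This is the whole idea; the rest is bookkeeping.

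Spelling this out for stochastic incompleteness, I would argue as follows. If $M$ is stochastically incomplete, then applying Theorem~\ref{main-teo} with an arbitrary $\phi\in\mathfrak{C}$, an arbitrary $T\in(0,+\infty]$ and an arbitrary nonnegative $u_0\in L^\infty(M)$ yields that \eqref{gen-zero-bdd} has at least two nonnegative $L^\infty$ solutions and that \eqref{elliptic-phi} has a nonnegative, nontrivial, bounded solution; this is exactly the ``for all'' form of the first and third bullets. The ``for all'' form trivially implies the ``for some'' form, once one notes that $\mathfrak{C}$ is nonempty (for instance $\phi(x)=x$, or $\phi(x)=x^m$ with $m\in(0,1)$, belong to $\mathfrak{C}$) and that for any such $\phi$ there is at least one admissible pair $(T,u_0)$ (say $T=1$, $u_0\equiv1$), so the existential statements are not vacuous. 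Conversely, if for \emph{some} triple $(\phi,T,u_0)$ problem \eqref{gen-zero-bdd} admits two nonnegative bounded solutions, then Theorem~\ref{main-teo} applied to that very triple gives stochastic incompleteness; similarly if \eqref{elliptic-phi} has a nonnegative, nontrivial, bounded solution for some $\phi$. This would show that each of the four bullets is equivalent to stochastic incompleteness.

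The parenthetical statements about stochastic completeness would follow by negating these four equivalences, or equivalently by repeating the argument verbatim starting from Theorem~\ref{main-teob}. The only point needing a word of justification is the non-vacuity of the existential quantifiers over $(\phi,T,u_0)$, which is immediate from the examples above; I expect no genuine obstacle here, since all the mathematical content is carried by Theorem~\ref{main-teo}, and the Corollary is merely its restatement in ``for some / for all'' form over the parameters $(\phi,T,u_0)$.
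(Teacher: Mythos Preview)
Your proposal is correct and matches the paper's approach: the paper states the corollary as an immediate consequence of Theorem~\ref{main-teo}, noting only that $\phi$, $T$ and $u_0$ are arbitrary there, which is exactly the quantifier argument you spell out. Your additional remark on the non-vacuity of the existential quantifiers is a reasonable bit of care that the paper leaves implicit.
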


\medskip
\begin{oss}\label{rem-ell}
\rm Notice that $\phi(u)=u$ belongs to $ \mathfrak{C} $. Hence Corollary \ref{cor1} implies that uniqueness for solutions to the generalized fast diffusion equation \eqref{gen-zero-bdd} holds, for (nonnegative) bounded data and $\phi\in \mathfrak{C}$, if and only if uniqueness holds for solutions to the heat equation \eqref{heat} for bounded data. Similarly, existence of nonnegative, nontrivial, bounded solutions to the semilinear elliptic problem \eqref{elliptic-phi} with $\phi\in \mathfrak{C}$ holds if and only if the linear equation $\Delta v=\lambda v$ admits a nonnegative, nontrivial, bounded solution for some, hence all $\lambda>0$. Let us focus on the more general semilinear equation
\begin{equation}\label{elliptic-general}
\Delta W=f(W) \qquad \text{in } M \, .
\end{equation}
Actually, as a consequence of previous results, the elliptic equivalence theorem is even stronger: from \cite[Theorem 3.1 and condition (vii) at page 43]{PRS}, which in fact goes back to \cite{PRS2}, one deduces that $M$ is stochastically complete if and only if for every strictly increasing $ f \in C^0(\mathbb{R}^+)  $, with $ f(0)=0 $, equation \eqref{elliptic-general} does not admit any nonnegative, nontrivial, bounded solution. In other words, one can drop the convexity assumption on $ \phi^{-1} $. See also \cite{MV} for generalizations to suitable nonlinear diffusion operators. However, for the methods of \cite{PRS,PRS2} to work the continuity of $W$ is essential, which for the Laplace-Beltrami operator is for free due to elliptic regularity solutions are at least $C^{1,\alpha}$), but may not hold for more general operators: we refer to Remark \ref{rrr} below. Vice versa, if $ M $ is stochastically incomplete then a nonnegative, nontrivial solution to $ \Delta v = \lambda v $ with $ \| v \|_\infty=1 $ is always a subsolution to \eqref{elliptic-general}, provided $ \lambda $ is chosen to be the maximum of $ f^\prime $ in $ [0,1] $. This does not require convexity either, only $ f \in C^1(\mathbb{R}^+) $. Once a nonnegative, nontrivial, bounded subsolution exists then a bounded solution lying above the subsolution can be constructed.
\end{oss}

\smallskip
\begin{oss}\label{reg}\rm
The problem of optimal regularity of merely \emph{distributional} solutions to \eqref{gen-zero-bdd} is, to our knowledge, presently open, even in the Euclidean setting, although it is known under some additional assumptions (for instance local finiteness of the energy of solutions, see e.g.~\cite{DUV}).
\end{oss}

\smallskip

\begin{oss}\label{rrr}\rm
The methods used here are in principle applicable to more general second order differential operators, not only the Laplace-Beltrami one, being based essentially only on suitable comparison principles, see Appendix \ref{app}. We stress that for our arguments to work, we do not need any kind of continuity of solutions to \eqref{elliptic-phi} or \eqref{gen-zero-bdd}.
\end{oss}

\medskip

The paper is organized as follows. In Section \ref{case-fde} we explain part of our strategy by focusing on the very important model case $ \phi(u)=u^m $ with $ m \in (0,1) $. In Section \ref{preliminaries} we briefly recall the concept of distributional solution for the differential equations considered. In Section \ref{proofmain} we prove our main result in the form of Theorem \ref{main-teo}. Appendix \ref{app} contains concise proofs of the comparison principles used in our argument. Appendix \ref{cor} states, for the reader's convenience, a list of known geometric and analytic conditions that ensure that a manifold is stochastically complete or incomplete, that in particular give explicit criteria on $M$ ensuring uniqueness, or nonuniqueness, of solutions to \eqref{gen-zero-bdd}.

\section{The fast diffusion case}\label{case-fde}

As it has already been mentioned in the Introduction, Theorems \ref{main-teo}--\ref{main-teob} include as a particular case $ L^\infty $ uniqueness and nonuniqueness sharp results for the \emph{fast diffusion equation} \eqref{pme} on manifolds, which corresponds to $ \phi(u)=u^m $ for any $ m \in (0,1) $. Such equation has widely been studied on Euclidean space: we quote, without any claim to completeness, the references \cite{HP,PZ0,PZ,V,BBDGV,BDGV}. More recently, it has also been investigated on manifolds, mainly with negative curvature: see \cite{BGV,GM} and \cite[Section 4]{BS}.

More specifically, as concerns existence and uniqueness issues, in the celebrated paper \cite{HP} M.A.~Herrero and M.~Pierre set up a whole theory for data merely in $ L^1_{\mathrm{loc}} $. Indeed, \cite[Theorem 2.1]{HP} establishes that for any $ m \in (0,1) $ and $ u_0 \in L^1_{\mathrm{loc}}(\mathbb{R}^N) $ there exists a (distributional) solution $ u $ of \eqref{pme} (at least up to some finite $T>0$), which is moreover locally bounded for positive times if $ m $ is larger than the critical value $ (N-2)^+/N $ \cite[Theorem 2.2]{HP}. This is by itself a remarkable result, since it means that the behaviour at spatial infinity of $ u_0(x) $ does not count at all, in sharp contrast with the linear case, namely the heat equation (see also the Introduction to \cite{GMP} for a detailed discussion on these topics). Furthermore, uniqueness holds in such a wide class \cite[Theorem 2.3]{HP}, under the additional assumption that solutions are \emph{strong}, i.e.~that $u_t$ is an $ L^1_{\mathrm{loc}}(\mathbb{R}^N \times (0,T)) $ function.

Existence for data in $ L^1_{\mathrm{loc}} $ also holds on manifolds regardless of curvature or volume-growth assumptions, since the proof of \cite[Theorem 2.1]{HP} is purely local. On the other hand, an analogue of the Herrero-Pierre result was proved in \cite{BS} (see Theorem 4.9 there), provided the Ricci curvature is bounded from below by an inverse quadratic function of the distance from a reference point, i.e.~negative curvatures must vanish at least with a quadratic rate at infinity.

\smallskip

The results entailed by Theorem \ref{main-teo} show that the picture is totally different on a general manifold $M$: if the latter is stochastically incomplete then uniqueness to \eqref{pme} (for $ m \in (0,1) $) \emph{fails} even in the class of bounded solutions. In order to give a flavor of the methods of proof we shall exploit in Section \ref{proofmain}, let us construct an explicit solution to \eqref{pme} with $ u_0 \equiv 0 $ which is not identically zero, on any stochastically incomplete Riemannian manifold $M$. Indeed, thanks to \cite[Theorem 8.18]{GrigHK}, we know that there exists a nonnegative, nontrivial, bounded solution $v$ to
$$
\Delta v = v \quad \text{in } M \, , \qquad \| v \|_{L^\infty(M)}=1 \, .
$$
In particular,
$$
\Delta v \ge v^{\frac{1}{m}} \quad \text{in } M \, ,
$$
so that by a standard monotone procedure (see e.g.~the proof of $(\ref{b}) \Rightarrow (\ref{c})$ in Section \ref{proofmain}) one can find a nonnegative, nontrivial, bounded function $ W $ solving
$$
\Delta W = W^{\frac{1}{m}} \quad \text{in } M \, ,
$$
which is in fact larger than $v$. Hence a routine computation shows that the separable function
$$
u := \left[ (1-m) \,t \right]^{\frac{1}{1-m}} W^{\frac{1}{m}}
$$
is a solution to $ u_t=\Delta u^m $, bounded in $ M \times [0,T] $ for every $ T>0 $. Since $ u(\cdot,0) \equiv 0 $, this yields nonuniqueness for the Cauchy problem \eqref{pme} with $ u_0 \equiv 0 $. Then one could actually exploit the existence of such a solution to construct multiple solutions for \emph{any} initial datum: we refer to the techniques adopted in the proof of $(\ref{a}) \Rightarrow (\ref{b})$.

\section{On the concepts of solution}\label{preliminaries}

We collect here the basic definitions of (distributional or very weak) solutions to \eqref{gen-zero-bdd} and \eqref{elliptic-phi}, as well as sub/supersolutions. We shall also briefly discuss existence of solutions to \eqref{gen-zero-bdd} for bounded initial data, which is rather standard. At the end of the paper, for the reader's convenience, we provide a short Appendix where related comparison principles (on balls) are established, since the latter constitute a key tool of our analysis.

\begin{den}\label{ds}
Let $ \phi \in \mathfrak{C} $, $ T \in (0,+\infty] $ and $ u_0 \in L^\infty(M)$, with $ u_0 \ge 0 $. We say that a nonnegative function $ u \in L^\infty(M \times (0,T)) $ is a distributional solution to problem \eqref{gen-zero-bdd} if it satisfies
\begin{equation}\label{def-sol}
\int_0^T \int_M u \, \xi_t  \, dx dt + \int_0^T \int_M \phi(u) \, \Delta \xi \, dx dt + \int_M u_0(x) \, \xi(0,x) \, dx = 0
\end{equation}
for all $ \xi \in C^2_c( M \times [0,T) ) $. Similarly, we say that a nonnegative function $ u \in L^\infty(M \times (0,T) ) $ is a distributional supersolution [subsolution] to \eqref{gen-zero-bdd} if \eqref{def-sol} is satisfied with ``='' replaced by ``$\le$'' [``$ \ge $''], for all nonnegative $ \xi \in C^2_c( M \times [0,T) ) $.
\end{den}


\begin{pro}\label{minimal}
Let $ \phi \in \mathfrak{C} $, $ T = +\infty $ and $ u_0 \in L^\infty(M)$, with $ u_0 \ge 0 $. There exists a (minimal) solution to \eqref{gen-zero-bdd}, in the sense of Definition \ref{ds}, which in particular satisfies $ \| u(t) \|_\infty \le \| u_0 \|_\infty $ for a.e.\ $ t>0 $.
\end{pro}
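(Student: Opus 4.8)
The plan is to construct the minimal solution via the standard exhaustion-plus-monotone-approximation scheme, working on an exhaustion of $M$ by relatively compact smooth domains $\Omega_1 \Subset \Omega_2 \Subset \cdots$ with $\bigcup_k \Omega_k = M$. On each $\Omega_k$ I would first regularize the problem twice: replace $u_0$ by a bounded datum $u_{0,k}$ (say a mollified cutoff, so that $0 \le u_{0,k} \le \|u_0\|_\infty$ and $u_{0,k} \to u_0$ a.e.), and replace $\phi$ by $\phi_\varepsilon(s) := \phi(s+\varepsilon) - \phi(\varepsilon)$, which is smooth up to $s=0$, strictly increasing, still concave, with $\phi_\varepsilon(0)=0$ and $\phi_\varepsilon'$ bounded below on bounded intervals by a positive constant (so the equation $u_t = \Delta\phi_\varepsilon(u)$ is uniformly parabolic once we also bound $\phi_\varepsilon'$ from above, which concavity gives automatically away from zero after adding $\varepsilon$). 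For the non-degenerate quasilinear problem
\[
\partial_t u = \Delta \phi_\varepsilon(u) \quad \text{in } \Omega_k \times (0,\infty), \qquad u = 0 \ \text{on } \partial\Omega_k, \qquad u(\cdot,0)=u_{0,k},
\]
classical parabolic theory gives a unique smooth nonnegative bounded solution $u_{k,\varepsilon}$, and the constant $\|u_0\|_\infty$ is a supersolution while $0$ is a subsolution, so $0 \le u_{k,\varepsilon} \le \|u_0\|_\infty$ by the comparison principle of Appendix \ref{app}.

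Next I would pass to the limit $\varepsilon \downarrow 0$ on the fixed domain $\Omega_k$. The uniform $L^\infty$ bound plus parabolic energy estimates (testing with $\phi_\varepsilon(u_{k,\varepsilon})$ gives a bound on $\nabla\phi_\varepsilon(u_{k,\varepsilon})$ in $L^2_{\mathrm{loc}}$, and testing with $\partial_t$ of a primitive gives a time-translation estimate) yield enough compactness — via an Aubin--Lions type argument — to extract an a.e.\ convergent subsequence $u_{k,\varepsilon} \to u_k$; since $\phi$ is continuous and the sequence is uniformly bounded, $\phi_\varepsilon(u_{k,\varepsilon}) \to \phi(u_k)$ in, say, $L^2_{\mathrm{loc}}$ by dominated convergence, and one passes to the limit in the weak (distributional) formulation on $\Omega_k$. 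This produces a distributional solution $u_k$ on $\Omega_k$ (with zero boundary data, datum $u_{0,k}$) satisfying $0 \le u_k \le \|u_0\|_\infty$. I would then arrange the construction so that the sequence $\{u_k\}$ is monotone nondecreasing in $k$: choosing the cutoff data so that $u_{0,k} \le u_{0,k+1}$, extending $u_k$ by zero outside $\Omega_k$, and invoking the comparison principle on $\Omega_k$ (the restriction of $u_{k+1}$ to $\Omega_k$ is a supersolution with larger boundary and initial data than $u_k$) gives $u_k \le u_{k+1}$. By monotone convergence $u_k \uparrow u$ pointwise, $u$ inherits the bound $\|u(t)\|_\infty \le \|u_0\|_\infty$, and $\phi(u_k) \uparrow \phi(u)$ by continuity and monotonicity of $\phi$; since every test function in $C^2_c(M\times[0,T))$ is supported in some $\Omega_k \times [0,T)$ for $k$ large, passing to the limit in \eqref{def-sol} for $u_k$ shows $u$ solves \eqref{gen-zero-bdd} in the sense of Definition \ref{ds}.

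Finally, minimality follows from the same comparison argument: if $\tilde u$ is any nonnegative distributional solution in $L^\infty(M\times(0,\infty))$, then on each $\Omega_k$ it is a supersolution with boundary data $\ge 0 = u_k|_{\partial\Omega_k}$ and initial data $\ge u_{0,k}$ (up to refining the choice of $u_{0,k}$ so that it stays below any given datum, or by a limiting argument since $u_{0,k}\le u_0$ and $\tilde u(\cdot,0)=u_0$), hence $u_k \le \tilde u$ on $\Omega_k$ by the comparison principle of Appendix \ref{app}, and letting $k\to\infty$ gives $u \le \tilde u$ on $M\times(0,\infty)$. The main obstacle I anticipate is the justification of comparison at the level of merely distributional (very weak) $L^\infty$ sub/supersolutions — one cannot test with $\phi(u)$ directly — so this step must rely on the precise comparison principles proved in Appendix \ref{app} (which presumably handle exactly this regularity class on balls); a secondary technical point is making the double approximation ($\varepsilon\downarrow 0$ then $k\to\infty$, or a simultaneous diagonal argument) consistent with the monotonicity needed for both the convergence and the minimality, which is bookkeeping rather than a genuine difficulty.
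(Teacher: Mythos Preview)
Your proposal is correct and follows essentially the same route as the paper's sketch: solve zero-Dirichlet approximate problems on an exhaustion, use comparison (Proposition~\ref{CP}) to obtain monotonicity in the exhaustion parameter and the uniform bound $\|u_0\|_\infty$, then pass to the monotone limit and invoke comparison once more for minimality. The paper is terser (it works on balls $B_R$, cites \cite{V} for the existence of strong energy solutions on each ball rather than spelling out the $\varepsilon$-regularization and Aubin--Lions argument, and does not bother to mollify $u_0$---your regularization of the datum is harmless but unnecessary), but the skeleton is the same.
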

\begin{proof}[Sketch of proof]
Let $ B_R $ be the ball of radius $ R>0 $ centered at any point $ o \in M $. One considers the approximate problems
\begin{equation*}\label{sol-approx-zero}
\begin{cases}
\left( u_R \right)_t = \Delta \phi\!\left( u_R \right)   & \text{in } B_R \times \mathbb{R}^+  \, , \\
u_R(\cdot,t) = 0 &  \text{on } \partial B_R \times  \mathbb{R}^+ \, , \\
u_R(\cdot,0) = u_0  & \text{in } B_R  \, ,
\end{cases}
\end{equation*}
which admit a unique (nonnegative) strong energy solution for each $ R>0 $, namely a bounded function $ u_R $ such that $ \nabla \phi(u_R) \in L^2(B_R \times (0,T)) $ and $ (u_R)_t \in L^1(B_R \times (0,T)) $. See \cite{V} and references therein for analogous results in the Euclidean setting, that can easily be extended to the present one. Such a solution is in fact the \emph{standard} one (see Appendix \ref{app}). In this class of solutions and sub/supersolutions the comparison principle holds, so that the sequence $ \{u_R\} $  is pointwise increasing with respect to $R$ and bounded above by $ \| u_0 \|_\infty $. Hence $ u:=\lim_{R\to\infty} u_R $ is indeed a solution to \eqref{gen-zero-bdd}, which is minimal again by comparison on balls. For all of the comparison results we have exploited we refer to Proposition \ref{CP} in the Appendix (when comparing $ u_{R} $ and $ u_{R+1} $ note that the fact that $ u_{R+1} $ is defined in $ B_{R+1} $ rather than the whole $M$ is irrelevant).
\end{proof}



As concerns the elliptic equation \eqref{elliptic-phi}, we give the following standard definition.

\begin{den}\label{vw}
Let $ \phi \in \mathfrak{C} $. We say that a nonnegative function $ W \in L^\infty(M) $ is a distributional solution to equation \eqref{elliptic-phi} if it satisfies
\begin{equation*}\label{def-sol-ell}
 \int_M W \, \Delta \eta  \, dx = \int_M \phi^{-1}(W) \, \eta  \, dx
\end{equation*}
for all $ \eta \in C^2_c( M ) $. Similarly, we say that a nonnegative function $ W \in L^\infty(M ) $ is a distributional supersolution [subsolution] to \eqref{elliptic-phi} if \eqref{vw} is satisfied with ``='' replaced by ``$\le$'' [``$ \ge $''], for all nonnegative $ \eta \in C^2_c( M  ) $.
\end{den}



\section{Proof of the main result}\label{proofmain}


\begin{proof}[Proof of $(\ref{a}) \Rightarrow (\ref{b})$]
First of all, it is convenient to rewrite the differential equation in \eqref{gen-zero-bdd} as follows:
\begin{equation}\label{eq:inv}
\psi^\prime(w) \, w_t = \Delta w \, .
\end{equation}
By virtue of the assumptions on $ \phi $, we know that $ \psi(w)=\phi^{-1}(w) $ is a \emph{convex} function having the same properties as $ \phi $, except that it is defined in the interval $ [0,a) $, with $ a:= \lim_{u \to +\infty} \phi(u) \in (0,+\infty] $. Thanks to \cite[Theorem 8.18]{GrigHK}, stochastic incompleteness ensures the existence of a nonnegative, nontrivial, bounded solution $ v $ to
\begin{equation}\label{v-SI}
\Delta v = \alpha v \qquad \text{in } M \, ,
\end{equation}
for every $ \alpha>0 $; we shall choose later a precise value of $ \alpha $. For simplicity, we can and shall assume that $ \| v \|_\infty = 1 $. The starting point is to look for a subsolution to \eqref{eq:inv} of the form
\begin{equation}\label{eq:ee}
w(x,t) = f(t) \, v(x) \, ,
\end{equation}
where $ f $ is a suitable positive, increasing function that we shall specify below. An immediate computation shows that $ w $ is actually a subsolution to \eqref{eq:inv} if and only if
\begin{equation*}\label{eq:inv-2}
\psi^\prime(f v) \, f_t \le \alpha f \, ,
\end{equation*}
which is trivially implied by
\begin{equation*}\label{eq:inv-3}
\psi^\prime(f) \, f_t \le \alpha f
\end{equation*}
given the monotonicity of $ \psi^\prime $ (recall that $ \psi $ is convex), the fact that $ f $ is increasing and the bound $ \| v \|_\infty = 1 $. So, let us pick $f$ as the solution to the Cauchy problem
\begin{equation}\label{choice-f}
\begin{cases}
f_t = \frac{\alpha f}{\psi^\prime(f)} \, , \\
f(0) = \varepsilon \in (0,a) \, ,
\end{cases}
\end{equation}
where $ \varepsilon$ for the moment is a free parameter.
Clearly, such a function is increasing and, the r.h.s.~of the differential equation being sublinear, it exists for all times $ t \in \mathbb{R}^+ $ (at this stage one could also use the variable $ g=\psi(f) $ and study the differential equation $ g_t=\phi(g) $). Let us set
$$
F(x):= \frac{1}{\alpha} \int_\varepsilon^x \frac{\psi^\prime(y)}{y} \, dy \qquad \forall x \in [\varepsilon,+\infty) \, .
$$
By integrating \eqref{choice-f}, we deduce that
\begin{equation*}\label{choice-f-1}
F(f(t)) = t \qquad \forall t \in \mathbb{R}^+ \, .
\end{equation*}
Because $F$ is locally regular in $ (0,a) $ and  $ \lim_{x \to a^-} F(x) = +\infty $, we infer that necessarily
\begin{equation}\label{choice-f-2}
\lim_{t\to+\infty} f(t) = a \qquad \Longrightarrow \qquad \lim_{t\to+\infty} \psi\! \left( f(t) \right) = + \infty  \, .
\end{equation}
Our next aim is to exhibit a subsolution to \eqref{gen-zero-bdd} which, at a certain time (smaller than $T$), exceeds the $ L^\infty $ norm of the initial datum. To this end, given $ w $ as in \eqref{eq:ee} with the above choices, let us consider the function
$$
\underline{w}:= \left( w-\varepsilon \right) \vee 0 \, .
$$
Clearly, $ \underline{w}(0) = \left( \varepsilon v -\varepsilon \right) \vee 0 = 0 $ and $ \underline{w} $ is still a subsolution to the differential equation \eqref{eq:inv}, since
$$
\Delta \underline{w} = \Delta \left( \left( w-\varepsilon \right) \vee 0 \right) \ge \chi_{w \ge \varepsilon} \, \Delta w \ge  \chi_{w \ge \varepsilon} \, w_t \, \psi^\prime(w) = \underline{w}_t \, \psi^\prime(w) \ge \underline{w}_t \, \psi^\prime(\underline{w}) \, ,
$$
where we have used Kato's inequality along with the fact that $ w_t \ge 0 $ and $ \psi^\prime $ is a nondecreasing function. Hence, going back to the original problem, we have constructed the following subsolution to \eqref{gen-zero-bdd}:
$$
\underline{u}(x,t) := \psi(\underline{w}(x,t)) = \psi\!\left( \left( f(t)\,v(x)-\varepsilon \right) \vee 0 \right) ;
$$
in particular,
\begin{equation}\label{eq-infty}
\left\| \underline{u}(t) \right\|_\infty = \psi\!\left( \left( f(t)-\varepsilon \right) \vee 0 \right) .
\end{equation}
Since $ \lim_{w \to a^-} \psi(w) = +\infty $, we can take $ \varepsilon $ so small that
\begin{equation*}\label{eq-infty-bis}
\psi\!\left( b - 2\varepsilon  \right) > \left\| u_0 \right\|_\infty ,
\end{equation*}
where $ b \in (0,a) $ is any number such that $ \psi\!\left( b \right) > \left\| u_0 \right\|_\infty $. By \eqref{choice-f-2} and \eqref{eq-infty}, we deduce that
\begin{equation}\label{eq-infty-ter}
\left\| \underline{u}(S) \right\|_\infty > \left\| u_0 \right\|_\infty
\end{equation}
provided $ S>0 $ is so large that $ f(S) \ge b-\varepsilon $. In order to make sure that $ S < T $, we can exploit a simple scaling argument: just note that $ f(t)=f_0(\alpha t) $, the function $f_0$ being the solution to \eqref{choice-f} corresponding to $ \alpha=1 $. Hence, if $ S_0 $ is the time at which $ f_0(S_0) = b-\varepsilon $, it is enough to take $ \alpha $ so large that $ S_0 / \alpha < T $.\\
Finally, we are able to construct a \emph{solution} to \eqref{gen-zero-bdd} which stays above $ \underline{u} $ in $ M \times (0,S) $, by means of the following approximate Cauchy-Dirichlet problems (let $ R>0 $):
\begin{equation}\label{sol-approx-dir-bb}
\begin{cases}
\left( u_R \right)_t = \Delta \phi\!\left( u_R \right)   & \text{in } B_R \times \mathbb{R}^+  \, , \\
u_R(\cdot,t) = \psi\!\left (f(t) \vee \phi(\| u_0 \|_\infty)\right) &  \text{on } \partial B_R \times  \mathbb{R}^+ \, , \\
u_R(\cdot,0) = u_0  & \text{in } B_R  \, .
\end{cases}
\end{equation}
It is apparent that $ \underline{u} $ is a subsolution to \eqref{sol-approx-dir-bb} for all $R>0$. As a consequence of the comparison principle on balls (we refer to Proposition \ref{CP}), along with the fact that the boundary condition is given by an increasing function of $t$, we can easily infer that
\begin{equation*}\label{std-comp-bb}
\underline{u} \le u_{R+1} \le u_{R} \le \psi\!\left(f(t) \vee \phi(\| u_0 \|_\infty)\right) \qquad \text{in } B_R \times \mathbb{R}^+
\end{equation*}
for all $ R>0 $. In particular, $ \{ u_R \} $ (e.g.~set to zero outside $B_R$) is a monotone-decreasing sequence of solutions to \eqref{sol-approx-dir-bb} which stays bounded in $ L^\infty(M \times (0,\tau)) $ for all $ \tau>0 $. Hence, by passing to the limit as $ R \to \infty $, we find that $ u_1 := \lim_{R \to \infty} u_R $ is a solution to \eqref{gen-zero-bdd} satisfying the additional estimate
\begin{equation}\label{std-comp-1-bb}
\underline{u} \le u_1 \quad \text{in } M \times \mathbb{R}^+ \qquad \Longrightarrow \qquad \left\| u_0 \right\|_\infty < \left\| u_1(S) \right\|_\infty ,
\end{equation}
where we have exploited \eqref{eq-infty-ter}. On the other hand, one can always construct the \emph{minimal} solution to \eqref{gen-zero-bdd}, namely the one obtained by the same approximation scheme as above, upon replacing the boundary condition in \eqref{sol-approx-dir-bb} with $ 0 $ on $ \partial B_R \times \mathbb{R}^+ $ (see Proposition \ref{minimal}). By proceeding in this way, we end up with another solution $ u_2 $ to \eqref{gen-zero-bdd} (also existing in $ M \times \mathbb{R}^+ $), which in particular satisfies
\begin{equation}\label{std-comp-2-bb}
u_2 \le \left\| u_0 \right\|_{\infty} \qquad \text{in } M \times \mathbb{R}^+ \, .
\end{equation}
Because \eqref{std-comp-1-bb} and \eqref{std-comp-2-bb} are clearly incompatible and $ S < T $, we necessarily deduce that $ u_1 \not \equiv u_2 $ in $ M \times (0,T) $. Actually we have to point out that, by construction, $ u_1 $ is in $ L^\infty(M \times (0,\tau)) $ for all $ \tau>0 $ but does not belong to $ L^\infty(M \times \mathbb{R}^+) $. In order to make $ u_1 $ globally bounded (which is required in the case $ T=+\infty $ only), but still different from $ u_2 $, it is enough to stop it at time $S$ and then restart it e.g.~with a minimal construction.
\end{proof}


\begin{proof}[Proof of $(\ref{b}) \Rightarrow (\ref{c})$]
Since the \emph{minimal} solution to \eqref{gen-zero-bdd} always exists, which we shall simply denote by $ u $, from assumption $(\ref{b})$ we deduce the existence of another solution $ u_\ast \in L^\infty(M \times (0,T)) $, which is necessarily larger than $ u $ (by minimality) but different from $ u $. By multiplying both $ u_\ast $ and $u$ by $ e^{-t} $ and subtracting, we obtain the following identity:
\begin{equation}\label{eq:ww1}
\left[ e^{-t} \left( u_\ast - u \right) \right]_t = e^{-t} \left[ \Delta \phi(u_\ast) - \Delta \phi(u) \right] - e^{-t} \left( u_\ast - u \right) .
\end{equation}
Now let us integrate \eqref{eq:ww1} between $ t=0 $ and $ t = T $: by exploiting the fact that $ u_\ast \ge u $ and $ u_\ast(\cdot,0) = u(\cdot,0) $, we end up with the inequality
\begin{equation}\label{eq:g1}
\Delta \left[ \int_0^{T} \phi(u_\ast(x,s)) \, \hat{e}(s) \, ds - \int_0^{T} \phi(u(x,s)) \, \hat{e}(s) \, ds \right] \ge \int_0^{T} \left[ u_\ast(x,s) - u(x,s) \right] \hat{e}(s) \, ds \, ,
\end{equation}
valid in the whole $M$, where we set $ \hat{e}(t) := e^{-t} / \left(1-e^{-T}\right) $ if $ T<+\infty $ and $ \hat{e}(t)=e^{-t} $ if $ T=+\infty $. Thanks to the concavity of $ \phi $, Jensen's inequality ensures that
$$
\int_0^{T} \phi\!\left( u_\ast(x,s) - u(x,s) \right) \hat{e}(s) \, ds \le \phi \! \left( \int_0^{T} \left[ u_\ast(x,s) - u(x,s) \right] \hat{e}(s) \, ds \right) ,
$$
so that by \eqref{eq:g1} and the monotonicity of $ \phi $ we infer that
\begin{equation}\label{eq:g2}
\Delta \left[ \int_0^{T} \phi(u_\ast(x,s)) \, \hat{e}(s) \, ds - \int_0^{T} \phi(u(x,s)) \, \hat{e}(s)  \, ds \right] \! \ge \phi^{-1} \! \left( \int_0^{T} \phi\!\left( u_\ast(x,s) - u(x,s) \right) \hat{e}(s) \, ds  \right) .
\end{equation}
Still the concavity of $ \phi $ (in the form of decrease of difference quotients), along with the fact that $ \phi(0)=0 $, yields
\begin{equation}\label{eq:g2-bis}
\phi\!\left( u_\ast(x,s) - u(x,s) \right) \ge \phi(u_\ast(x,s)) - \phi(u(x,s)) \, .
\end{equation}
Hence, from \eqref{eq:g2}, \eqref{eq:g2-bis} and again the monotonicity of $ \phi $, we have:
\begin{equation*}\label{eq:g3}
\begin{aligned}
& \Delta \left[ \int_0^{T} \phi(u_\ast(x,s)) \, \hat{e}(s) \, ds - \int_0^{T} \phi(u(x,s)) \, \hat{e}(s)  \, ds \right] \\
\ge & \, \phi^{-1} \! \left( \int_0^{T} \phi(u_\ast(x,s)) \, \hat{e}(s) \, ds - \int_0^{T} \phi(u(x,s)) \, \hat{e}(s) \, ds \right) .
\end{aligned}
\end{equation*}
Summing up, we have proved that the function
$$
\underline{W}(x) := \int_0^{T} \phi(u_\ast(x,s)) \, \hat{e}(s) \, ds - \int_0^{T} \phi(u(x,s)) \, \hat{e}(s) \, ds \, ,
$$
which is nonnegative, nontrivial and bounded by construction, satisfies
$$
\Delta \underline{W} \ge \phi^{-1}\!\left( \underline{W} \right)  \qquad \text{in } M \, ,
$$
namely it is a \emph{subsolution} to \eqref{elliptic-phi}. On the other hand, it is plain that the constant function $ \overline{W} \equiv \| \underline{W} \|_\infty $ is a \emph{supersolution} to the same equation. We can therefore construct the claimed solution $ W $ by the following standard approximation scheme. For all $ R > 0 $, we solve the Dirichlet problems
\begin{equation}\label{pb-balls-kk}
\begin{cases}
\Delta W_R = \phi^{-1} \! \left( W_R \right)  & \text{in } B_R \, , \\
W_R = \overline{W}  &  \text{on } \partial B_R \, . \\
\end{cases}
\end{equation}
Clearly, for each $ R>0 $, $ \underline{W} $ and $ \overline{W} $ are two ordered sub- and supersolutions, respectively, to \eqref{pb-balls-kk}. By comparison in $B_R$ (see Proposition \ref{CE}), we deduce that
\begin{equation*}\label{eq:est-R-kk}
\underline{W} \le W_{R+1} \le W_R \le \overline{W} \qquad \text{in } B_R \, .
\end{equation*}
Hence, upon letting $ R \to \infty $, we find that $ W:=\lim_{R\to\infty} W_R $ is indeed a nonnegative solution to \eqref{elliptic-phi}, which is nontrivial and bounded since $ W \ge \underline{W} $ and $ W \le \overline{W} $ in $M$.
\end{proof}

\begin{proof}[Proof of $(\ref{c}) \Rightarrow (\ref{a})$]
As a preliminary step, we observe that \eqref{elliptic-phi} yields
\begin{equation}\label{eq-psi-1}
\Delta W \ge \frac{W}{2} \, \psi^\prime\!\left( \frac W 2 \right)  \qquad \text{in } M \, ,
\end{equation}
where $ \psi(w):=\phi^{-1}(w) $ is the same convex function introduced in the proof of implication $(\ref{a}) \Rightarrow (\ref{b})$. Note that in \eqref{eq-psi-1} we have just used the fact that $ \psi(w) $ lies above the tangent line at $ \frac{w}{2} $, along with the positivity of $ \psi(\frac{w}{2}) $. So from \eqref{eq-psi-1} we can deduce the existence of a nonnegative, nontrivial, bounded function satisfying
\begin{equation*}\label{eq-psi-bis}
\Delta W \ge \frac{W}{2} \, \psi^\prime\!\left( W \right)  \qquad \text{in } M \, ,
\end{equation*}
which for simplicity we shall keep denoting by $W$.
We look for a (local-in-time) subsolution to \eqref{eq:inv} of the form
$$
\underline{w}(x,t) = f(t) \, W(x)
$$
for a suitable positive, increasing function $f$. An elementary computation shows that, to this aim, we need to require that
\begin{equation*}\label{eq-subsol-gen}
\psi^\prime\!\left( f \, W \right) f_t \, W \le f \, \frac{W}{2} \, \psi^\prime(W) \, ,
\end{equation*}
which is trivially implied by
$$
f_t \le \frac{f}{2}
$$
as long as $ f \le 1 $, recalling the fact that $ \psi^\prime $ is nondecreasing. As a consequence, we deduce that (for instance) the function
$$
\underline{w}(x,t) := \frac{e^{\frac{t}{2}}}{2} \, W(x)
$$
is indeed a subsolution to \eqref{eq:inv} at least up to $ t = \log 4 $, namely
$$
\underline{u}(x,t) = \psi\!\left( \frac{e^{\frac{t}{2}}}{2} \, W(x) \right) \qquad \forall t \in [0,\log 4]
$$
is a subsolution to $ u_t = \Delta \phi(u) $, going back to the original variables. Note that
\begin{equation}\label{eq:evo-linf}
\left\| \underline{u}(t) \right\|_\infty =  \psi\!\left( \frac{e^{\frac{t}{2}}}{2} \, \| W \|_\infty \right) .
\end{equation}
We are therefore in position to construct a bounded solution to the Cauchy problem
\begin{equation}\label{k1}
\begin{cases}
u_t = \Delta \phi(u) & \text{in } M \times \mathbb{R}^+ \, , \\
u(\cdot,0)= \left\| \underline{u}(0) \right\|_\infty = \psi\!\left( \frac{1}{2} \, \| W \|_\infty \right) =: c > 0 & \text{in } M  \, ,
\end{cases}
\end{equation}
which is \emph{larger} than the positive constant $ c $. Indeed, it is enough to solve the approximate problems (for all $R>0$)
\begin{equation}\label{k2}
\begin{cases}
\left( u_R \right)_t = \Delta \phi\!\left( u_R \right)   & \text{in } B_R \times \mathbb{R}^+  \, , \\
u_R(\cdot,t) =  \psi\!\left( \frac{e^{\frac{t}{2}}}{2} \, \| W \|_\infty \right) \wedge \psi\!\left( \| W \|_\infty \right) \ge c &  \text{on } \partial B_R \times  \mathbb{R}^+ \, , \\
u_R(\cdot,0) = c >0 & \text{in } B_R  \, ,
\end{cases}
\end{equation}
and let $ R \to \infty $. Since $ \underline{u} $ is a subsolution to \eqref{k2}, for each $ R>0 $, up to $ t=\log 4 $, by arguing similarly to the proof of implication $ (\ref{a}) \Rightarrow (\ref{b}) $ we deduce that $ u_\ast := \lim_{R \to \infty} u_R $ (monotone decreasing limit w.r.t.~$R$) is a solution to \eqref{k1} satisfying
$$
c \le u_\ast \le \psi\!\left( \| W \|_\infty \right) \quad \text{in } M \times \mathbb{R}^+ \qquad \text{and} \qquad u_\ast \ge \underline{u} \quad \text{in } M \times [0,\log 4] \, ;
$$
in particular, recalling \eqref{eq:evo-linf}, there holds
\begin{equation*}\label{kkk}
\left\| u_\ast(t) \right\|_\infty \ge \psi\!\left( \frac{e^{\frac{t}{2}}}{2} \, \| W \|_\infty \right) \qquad \forall t \in [0,\log 4] \, ,
\end{equation*}
so that $ u_\ast $ is a bounded solution to \eqref{k1}, everywhere not smaller than $c$ since such a constant is trivially a subsolution to \eqref{k2} for all $R>0$ (recall again Proposition \ref{CP}), but not identically $c$. By performing the same computations as in the first part of the proof of implication $(\ref{b}) \Rightarrow (\ref{c})$, using $ u \equiv c $ and $ T=+\infty $ there, we obtain the following identity:
\begin{equation}\label{eq:g1-ast}
\Delta \left[ \int_0^{+\infty} \phi(u_\ast(x,s)) \, e^{-s}  \, ds \right] = \int_0^{+\infty} \left[ u_\ast(x,s) - c \right] e^{-s}  \, ds \, .
\end{equation}
Now note that, as $ \phi $ is concave,
\begin{equation}\label{eq:g1-conc}
\phi(u_\ast) \le \phi(c) + \phi^\prime(c) \left( u_\ast - c \right) ,
\end{equation}
where $ \phi^\prime(c)>0 $ because $ \phi $ is strictly monotone. Hence, by combining \eqref{eq:g1-ast} and \eqref{eq:g1-conc}, we end up with the inequality
\begin{equation*}\label{eq:g2-ast}
\Delta \left[ \int_0^{+\infty} \phi(u_\ast(x,s)) \, e^{-s}  \, ds \right] \ge \frac{1}{\phi^\prime(c)} \int_0^{+\infty} \left[ \phi\!\left( u_\ast(x,s) \right) - \phi(c) \right] e^{-s}  \, ds \, ,
\end{equation*}
namely the function
$$
V(x) := \int_0^{+\infty} \left[ \phi\!\left( u_\ast(x,s) \right) - \phi(c) \right] e^{-s}  \, ds
$$
is nonnegative, nontrivial, bounded and satisfies
\begin{equation*}\label{eq:fin}
\Delta V \ge \frac{1}{\phi^\prime(c)} \, V  \qquad \text{in } M \, .
\end{equation*}
Starting from $V$, we can then construct a nonnegative, nontrivial, bounded solution to
$$ \Delta v = \frac{1}{\phi^\prime(c)} \, v \qquad \text{in } M $$
just by performing again the same approximation scheme as in the end of the proof of implication $(\ref{b}) \Rightarrow (\ref{c})$. This ensures that the manifold $M$ is stochastically incomplete, due again to \cite[Theorem 8.18]{GrigHK}.
\end{proof}

\begin{oss} \rm In the proof of $(\ref{a}) \Rightarrow (\ref{b})$ we exploited the assumption of stochastic incompleteness only in terms of existence of nontrivial solutions to \eqref{v-SI} for \it any $\alpha>0$. \rm On the other hand, in the proof of $(\ref{c}) \Rightarrow (\ref{a})$ we established stochastic incompleteness in terms of existence of nontrivial solutions to \eqref{v-SI} for a \emph{specific} $\alpha_0>0$. Nonetheless, it is easy to show directly that this yields existence for \it any $\alpha>0$. \rm Indeed, let $\widetilde v$ be a nonnegative, nontrivial, bounded solution to $\Delta \widetilde v=\alpha_0\widetilde v$. Clearly $\widetilde v$ is a subsolution to $\Delta v=\alpha v$ for every $\alpha\in(0,\alpha_0)$. Moreover, for all $\beta>1$ Kato's inequality ensures that $ v_\beta := \widetilde v^\beta$ is a subsolution to $\Delta v=\beta\alpha_0 v$. Hence our method of proof shows that, under assumption $(\ref{c})$, there is a nonnegative, nontrivial, bounded subsolution to $\Delta v=\alpha v$ for any $\alpha>0$. By exploiting $ v_\beta $ and proceeding exactly as in the end of the proof of  $(\ref{b}) \Rightarrow (\ref{c})$, one can then show the existence of a nonnegative, nontrivial, bounded \emph{solution} to $\Delta v=\alpha v$ for any $\alpha>0$. Going back to our proof of
$(\ref{a}) \Rightarrow (\ref{b})$, in this way we are able to provide a \emph{direct} proof of the implication $(\ref{c}) \Rightarrow (\ref{b})$, thus establishing the equivalence between $ (\ref{b}) $ and $ (\ref{c}) $ independently of $ (\ref{a}) $. The equivalence between $(\ref{c})$ and $(\ref{a})$ can then be deduced as explained in Remark \ref{rem-ell}.
\end{oss}


\appendix

\section{Comparison principles on balls}\label{app}

The aim of this appendix is to justify the comparison principles of which we take advantage in the proof of Theorem \ref{main-teo}, both for parabolic and elliptic problems. We only need to compare suitable solutions with general distributional sub/supersolutions. We do not intend to provide comparisons under the most general assumptions, but only to our specific purposes.

\smallskip

Given $ \phi \in \mathfrak{C} $, $ R>0 $, $ T>0 $, $ u_0 \in L^\infty(M) $ with $ u_0 \ge 0 $ and $ g \in L^\infty_{\rm loc}([0,+\infty)) $ with $ g \ge 0 $, we say that a nonnegative function $ u_R \in L^\infty(M \times (0,T)) $ is the \emph{standard solution} to the Cauchy-Dirichlet problem
\begin{equation}\label{std-prob-R}
\begin{cases}
\left( u_R \right)_t = \Delta \phi\!\left( u_R \right)   & \text{in } B_R \times (0,T)  \, , \\
u_R(\cdot , t) = g(t) &  \text{on } \partial B_R \times (0,T) \, , \\
u_R(\cdot,0) = u_0  & \text{in }  B_R  \, ,
\end{cases}
\end{equation}
if it is obtained as a monotone limit, as $ \varepsilon \downarrow 0 $, of the solutions $ u_R^\varepsilon $ (let $ \varepsilon>0 $) to the approximate (quasilinear) problems
\begin{equation}\label{std-prob-R-eps}
\begin{cases}
\left( u_R^\varepsilon \right)_t = \Delta \phi\!\left( u_R^\varepsilon \right)   & \text{in } B_R \times (0,T)  \, , \\
u_R^\varepsilon(\cdot,t) = g(t) + \varepsilon &  \text{on } \partial B_R \times (0,T) \, , \\
u_R^\varepsilon(\cdot,0) = u_0 +\varepsilon & \text{in } B_R  \, .
\end{cases}
\end{equation}

\smallskip

For simplicity, and in order not to weigh down the discussion, we shall suppose that $M$ is geodesically complete and that each $ B_R $ has a smooth boundary. This is of course not necessarily true, since on general manifolds $  B_R$ is only Lipschitz regular. More rigorously, instead of working in $ B_R $, one should consider appropriate sublevel sets of a smooth \emph{exhaustion function} of $M$,  which can always be constructed independently of geodesic completeness (see e.g.~\cite[Proposition 2.28, Proposition 5.47 and Theorem 6.10]{Lee}).

\begin{pro}[Comparison for parabolic problems]\label{CP}
Let $ u_R $ be the standard solution of \eqref{std-prob-R} and $ \overline{u} $ [$ \underline{u} $] be a distributional supersolution [subsolution] to \eqref{gen-zero-bdd}, in the sense of Definition \ref{ds}. Suppose that $ \overline{u} \ge g(t) $ [$ \underline{u} \le g(t) $] a.e.~in $ B_R \times (0,T) $. Then $ \overline{u} \ge u_R $ [$ \underline{u} \le u_R $] a.e.~in $ B_R \times (0,T) $.
\end{pro}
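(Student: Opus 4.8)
The plan is to reduce the comparison statement to a comparison between the distributional sub/supersolution and the approximating classical solutions $u_R^\varepsilon$ of \eqref{std-prob-R-eps}, and then pass to the limit $\varepsilon\downarrow 0$. The reason for this two-step structure is that $u_R^\varepsilon$ is a genuinely nondegenerate parabolic solution (the structure function $\phi$ has been shifted away from the origin, where $\phi'$ may blow up), so that standard results guarantee $u_R^\varepsilon$ is smooth up to the lateral boundary, whereas $u_R$ itself is only a monotone limit. Once comparison against each $u_R^\varepsilon$ is established, letting $\varepsilon\downarrow 0$ and using the monotone convergence $u_R^\varepsilon \downarrow u_R$ yields the claim. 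I will carry out the argument for the supersolution case $\overline u \ge u_R$; the subsolution case is entirely symmetric (or follows by inspecting signs).

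First I would fix $\varepsilon>0$ and compare $\overline u$ with $u_R^\varepsilon$. Set $w := \phi(u_R^\varepsilon) - \phi(\overline u)$. On the lateral boundary $\partial B_R\times(0,T)$ one has $u_R^\varepsilon = g+\varepsilon > g \le \overline u$ (using the hypothesis $\overline u \ge g$ and monotonicity of $\phi$), and at $t=0$ one has $u_R^\varepsilon = u_0+\varepsilon > u_0 \le \overline u$, so $w$ is strictly negative on the parabolic boundary — this strict gap is exactly what the $\varepsilon$-shift buys us and what will let the argument absorb the lack of regularity of $\overline u$. Testing the equation for $u_R^\varepsilon$ and the distributional supersolution inequality for $\overline u$ against a common nonnegative test function, subtracting, and using that $\phi$ is monotone, one obtains a differential inequality of the form $\partial_t(u_R^\varepsilon - \overline u) \le \Delta w$ in the distributional sense; since $a\mapsto\phi(a)$ is strictly increasing, $w$ and $u_R^\varepsilon-\overline u$ have the same sign, so it suffices to show $w\le 0$ a.e.\ in $B_R\times(0,T)$. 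The standard device here is to test with $\xi = \eta(t)\,(w)_+$ (suitably regularized in time and truncated near the boundary so that $\xi\in C^2_c(B_R\times[0,T))$, using the strict negativity of $w$ near $\partial B_R$), integrate by parts, and use Kato-type / monotonicity arguments to conclude that $\int_{B_R}(u_R^\varepsilon-\overline u)_+\,dx$ satisfies a Gronwall inequality with zero initial datum, hence vanishes; therefore $u_R^\varepsilon\le\overline u$ a.e.

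Finally I would let $\varepsilon\downarrow 0$. Since $u_R^\varepsilon \downarrow u_R$ pointwise a.e.\ by the definition of the standard solution, the inequality $u_R^\varepsilon\le\overline u$ passes to the limit and gives $u_R\le\overline u$ a.e.\ in $B_R\times(0,T)$, as desired. The subsolution case is obtained by the symmetric choice $w := \phi(\underline u)-\phi(u_R^\varepsilon)$, now using $\underline u \le g < g+\varepsilon$ on the lateral boundary and $\underline u(\cdot,0)\le u_0 < u_0+\varepsilon$, so the parabolic-boundary gap again has the right (strict) sign. The main obstacle I anticipate is the last step of the energy estimate at the level of the distributional (sub/super)solution: one must justify testing the weak formulation with a function built out of $(w)_+$, which is not a priori admissible since $w$ need not be $C^2$ nor compactly supported in the open cylinder. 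This is handled by the usual Steklov-averaging in time together with a cutoff supported where $w<0$ near the lateral boundary — legitimate precisely because the $\varepsilon$-shift forces a strict sign there — and by approximating $(\cdot)_+$ by smooth convex functions; these are routine but somewhat delicate regularization steps, and they are the technical heart of the proof.
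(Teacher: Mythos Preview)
Your strategy has a genuine gap at the boundary step, and it cannot be repaired by a finer cutoff. You claim that on $\partial B_R\times(0,T)$ one has $u_R^\varepsilon = g+\varepsilon > g \le \overline u$, and conclude from this chain that $w=\phi(u_R^\varepsilon)-\phi(\overline u)$ is strictly negative there. But the chain $u_R^\varepsilon > g$ and $\overline u\ge g$ says nothing about the sign of $u_R^\varepsilon-\overline u$: the hypothesis allows $\overline u=g$ exactly on $\partial B_R$ (and $\overline u(\cdot,0)=u_0$ exactly), in which case $u_R^\varepsilon-\overline u=\varepsilon>0$ and $w>0$ on the parabolic boundary. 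In other words, the intermediate comparison $u_R^\varepsilon\le\overline u$ that you aim to prove is simply \emph{false} in general for fixed $\varepsilon>0$; the $\varepsilon$-shift produces a boundary error of the \emph{wrong} sign, not a favourable strict gap. Consequently $(w)_+$ need not vanish near $\partial B_R$, your cutoff ``supported where $w<0$'' does not exist, and the Gronwall step cannot start from zero. The paper's proof is designed precisely to absorb this boundary defect: it is a dual (Holmgren-type) argument in which one solves a backward linear parabolic problem for the test function $\xi_\varepsilon$ with final datum $\omega\in C^\infty_c(B_R)$, tracks the resulting boundary term $\int_0^S\!\int_{\partial B_R}[\phi(g)-\phi(g+\varepsilon)]\,|\partial_\nu\xi_\varepsilon|\,d\sigma dt$, bounds $|\partial_\nu\xi_\varepsilon|$ uniformly in $\varepsilon$ via a Green-function barrier, and lets this term vanish with $\varepsilon$ by continuity of $\phi$. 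Only \emph{after} $\varepsilon\downarrow 0$ does one obtain $\overline u\ge u_R$.

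A second, independent difficulty is that your proposed test function $\eta(t)(w)_+$ depends on $\overline u$, which is merely an $L^\infty$ distributional supersolution with no assumed spatial regularity whatsoever; Definition~\ref{ds} requires $\xi\in C^2_c$, and Steklov averaging in time plus smoothing $(\cdot)_+$ does nothing to make $\Delta\xi$ meaningful. The energy/Kato approach you outline needs at least $\phi(\overline u)\in L^2_tH^1_x$ to integrate by parts, and that is not available here. This is exactly why the paper places all regularity on the dual side: the backward solution $\xi_\varepsilon$ is smooth, and $\overline u$ enters only through integrals against $\xi_\varepsilon$, $(\xi_\varepsilon)_t$ and $\Delta\xi_\varepsilon$.
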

\begin{proof}
The argument is a dual one, to some extent classical, so we shall be concise and stress the most critical points. Our main ideas are borrowed from \cite{PZ} and \cite{GMP}. We shall only prove the result for supersolutions, since the proof for subsolutions is completely analogous.
First of all, a time cut-off argument immediately shows that the inequality
\begin{equation}\label{n3}
\begin{aligned}
& \int_0^S \int_{B_R} \left[ (\overline{u} - u_R^\varepsilon) \, \xi_t + \left( \phi( \overline{u} ) - \phi(u_R^\varepsilon) \right) \Delta \xi \right] dx dt  \\
\le & \int_{B_R} \left[ \overline{u}(x,S) - u_R^\varepsilon(x,S) \right] \xi(x,S) \, dx + \varepsilon \int_{B_R} \xi(x,0) \, dx
\end{aligned}
\end{equation}
holds for almost every $ S \in (0,T) $ and any nonnegative $ \xi \in C^\infty_c(B_R \times [0,S]) $. Let us introduce the following function, defined in $ B_R \times (0,T) $:
\begin{equation*}\label{n4}
a_\varepsilon(x,t):=
\begin{cases}
\frac{\phi(\overline{u}(x,t)) - \phi(u_R^\varepsilon(x,t))}{\overline{u}(x,t) - u_R^\varepsilon(x,t)} & \text{if } \overline u (x,t) \neq u_R^\varepsilon(x,t) \, , \\
0 & \text{if } \overline u(x,t) = u_R^\varepsilon (x,t) \, .
\end{cases}
\end{equation*}
Note that, because $ \phi $ is increasing and $ C^1(\mathbb{R}^+ \setminus \{ 0 \}) $, the functions $ \overline{u} , u_R^\varepsilon $ are bounded and by construction  $ u_R^\varepsilon \ge \varepsilon $, in fact $ a_\varepsilon $ is nonnegative and bounded in $ B_R \times (0,T) $. Given an arbitrary nonnegative function $ \omega \in C^\infty_c(B_R) $, we can take $ \xi \equiv \xi_\varepsilon $ as the (classical) solution to the following backward parabolic problem:
\begin{equation}\label{n9}
\begin{cases}
(\xi_\varepsilon)_t  + a_\varepsilon(x,t) \, \Delta \xi_\varepsilon = 0 & \text{in } B_R \times (0, S) \, , \\
\xi_\varepsilon(\cdot,t) = 0 & \text{on } \partial B_R \times (0,S) \, , \\
\xi_\varepsilon(\cdot,S) = \omega & \text{in } B_R \, .
\end{cases}
\end{equation}
Actually, here there are two issues: $ a_\varepsilon $ is not a smooth and positive coefficient and $ \xi_\varepsilon $ is not compactly supported inside $ B_R $. The first one can be handled by a routine approximation with regular and bounded away from zero coefficients.  More precisely, following \cite[Proof of Theorem 2.3]{GMP}, since $ a_\varepsilon $ is nonnegative and bounded by a local regularization and lifting procedure we can construct a smooth and positive function $ \tilde{a}_\varepsilon $ such that
\begin{equation}\label{n48-bis}
\int_0^T \int_{B_R} \frac{\left(a_\varepsilon -\tilde{a}_\varepsilon\right)^2}{\tilde{a}_\varepsilon} \, dx dt \leq \varepsilon \, .
\end{equation}
Then, we solve \eqref{n9} with $ \tilde{a}_\varepsilon $ replacing $ a_\varepsilon $ (for simplicity we still denote by $ \xi_\varepsilon $ the corresponding solution). By linear parabolic theory, we can now assert that $ \xi_\varepsilon $ is indeed nonnegative, smooth and bounded by $  \| \omega \|_\infty $; if it were also spatially compactly supported in $B_R$, taking advantage of \eqref{n3} and the definition of $ a_\varepsilon $ we would end up with
\begin{equation}\label{n3bis}
\begin{aligned}
& \int_0^S \int_{B_R} \left(\overline{u} - u_R^\varepsilon\right) \left( a_ \varepsilon - \tilde{a}_\varepsilon \right) \Delta \xi_\varepsilon \, dx dt \le \int_{B_R} \left[ \overline{u}(x,S) - u_R^\varepsilon(x,S) \right] \omega(x) \, dx + \varepsilon \left\| \omega \right\|_\infty V_R \, ,
\end{aligned}
\end{equation}
where $ V_R $ stands for the volume of $ B_R $. On the other hand, the second issue we have pointed out above is actually related to the fact that $ \xi_\varepsilon $ is not spatially compactly  supported. Nevertheless, we will now prove  that \eqref{n3bis} still holds up to adding to the l.h.s.~the correct boundary terms involving the outer normal derivative $ \tfrac{\partial \xi_\varepsilon}{\partial \nu} $ of $ \xi_\varepsilon $ (which is nonpositive), that is
$$
\int_0^S \int_{\partial B_R} \left[ \phi(\overline{u}(x,t)) - \phi(u_R^\varepsilon(x,t)) \right] \left| \frac{\partial \xi_\varepsilon}{\partial \nu} (\sigma,t) \right| d\sigma dt \, ,
$$
where $ d\sigma $ is $ (N-1) $-dimensional Hausdorff measure on $ \partial B_R $. Since $ u_R^\varepsilon = g+\varepsilon $ and $ \overline{u} \ge g $ on $ \partial B_R $, we infer that \eqref{n3bis} can be replaced by
\begin{equation}\label{n3ter}
\begin{aligned}
& \int_0^S \int_{\partial B_R} \left[ \phi(g(t)) - \phi(g(t)+\varepsilon) \right] \left| \frac{\partial \xi_\varepsilon}{\partial \nu} (\sigma,t) \right| d\sigma dt +  \int_0^S \int_{B_R} \left(\overline{u} - u_R^\varepsilon\right) \left( a_ \varepsilon - \tilde{a}_\varepsilon \right) \Delta \xi_\varepsilon \, dx dt  \\
\le & \int_{B_R} \left[ \overline{u}(x,S) - u_R^\varepsilon(x,S) \right] \omega(x) \, dx + \varepsilon \left\| \omega \right\|_\infty V_R \, .
\end{aligned}
\end{equation}
In order to show this, we exploit a cut-off technique whose main ideas can be found in \cite[Proof of Theorem 2.3]{GMP}, although additional issues occur here. We select a family of cut-off functions $ \{ \eta_\delta \}_{0<\delta<R/2} \subset C^\infty_c(B_R) $ enjoying the following properties:
\begin{equation}\label{ciao}
\begin{gathered}
\eta_\delta(x) \equiv \eta_\delta(\rho(x)) \ \text{is radially nonincreasing w.r.t.~} \rho(x):=\operatorname{d}(x,o)\, ,  \\
 0 \le \eta_\delta \le 1 \, ,\qquad \eta_\delta \equiv 1 \quad \text{in } B_{R-2\delta} \, , \qquad \eta_\delta \equiv 0 \quad \text{in } B_{R-\delta}^c \, , \\
 \left| \nabla \eta_\delta \right| \le \frac{c}{\delta} \, , \qquad \left| \Delta \eta_\delta \right| \le \frac{c}{\delta^2} \, , \\
\lim_{\delta \downarrow 0} \int_{R-2\delta}^{R-\delta} f'(\rho) \, \eta_\delta'(\rho) \, d\rho = - f'(R) \qquad \text{and} \qquad \lim_{\delta \downarrow 0} \int_{R-2\delta}^{R-\delta} f(\rho) \, \eta_\delta''(\rho) \, d\rho = f'(R) \, ,
\end{gathered}
\end{equation}
where $ c>0 $ is a suitable constant independent of $ \delta $. Note that, whenever no ambiguity occurs, we identify $ \eta_\delta $ with the $1$-dimensional function of which it is the radial realization. Now, with no loss of generality, we can assume that $R$ is a \emph{Lebesgue radius} for $ \overline{u} $, in the sense that
\begin{equation}\label{leb-rad}
\lim_{\delta \downarrow 0} \frac{1}{\delta} \int_0^T \int_{B_{R-\delta} \setminus B_{R-2\delta}} \left| \phi(\overline{u}) - \overline{w}_\sigma \right| dxdt = 0
\end{equation}
where the bounded function $ \overline{w}_\sigma $ is the restriction of $\phi(\overline{u})$ to $\partial B_R$.
The same property also holds for $ \phi(u_R^\varepsilon) $ since, as a constructed solution, it belongs to the appropriate Sobolev spaces, hence it has a well-defined bounded trace on $ \partial B_R $ that coincides with $ \phi(g(t)+\varepsilon) $. Upon applying \eqref{n3} to $ \xi \equiv \eta_\delta \xi_\varepsilon $ and using the fact that $ \nabla\eta_\delta $ is supported in $ B_{R-\delta} \setminus B_{R-2\delta} $, we obtain:
\begin{equation}\label{int-pp}
\begin{aligned}
& \int_0^S \int_{B_R}  \left( \phi( \overline{u} ) - \phi(u_R^\varepsilon) \right) \Delta \!\left( \eta_\delta \xi_\varepsilon \right) dx dt \\
= & \int_0^S \int_{B_{R-\delta} \setminus B_{R-2\delta}}  \left( \phi( \overline{u} ) - \phi(u_R^\varepsilon) \right) \left( 2 \left\langle \nabla \xi_\varepsilon , \nabla \eta_\delta \right\rangle + \xi_\varepsilon \Delta \eta_\delta \right) dx dt \, .
\end{aligned}
\end{equation}
Note that we focus on this term only since all the other integrals in \eqref{n3} plainly converge as $ \delta \downarrow  0$. By virtue of the third property in \eqref{ciao}, \eqref{leb-rad} and the fact that $ \xi_\varepsilon $ vanishes on $ \partial B_R $, we infer:
\begin{equation}\label{int-pp-2}
\lim_{\delta \downarrow 0} \int_0^S \int_{B_{R-\delta} \setminus B_{R-2\delta}}  \left| \phi( \overline{u} ) - \phi(u_R^\varepsilon) -  \overline{w}_\sigma + \phi(g(t)+\varepsilon) \right| \left| 2 \left\langle  \nabla \xi_\varepsilon , \nabla \eta_\delta \right\rangle + \xi_\varepsilon \Delta \eta_\delta \right| dx dt = 0 \, .
\end{equation}
On the other hand, the fourth property in \eqref{ciao} (plus dominated convergence) and the expansion $ \xi_\varepsilon(x) \Delta \eta_\delta(x) = \xi_\varepsilon(x) \eta_\delta''(\rho(x)) + O(1) $ in $ B_{R-\delta} \setminus B_{R-2\delta} $ yield
\begin{equation}\label{int-pp-3}
\begin{aligned}
& \lim_{\delta \downarrow 0} \int_0^S \int_{B_{R-\delta} \setminus B_{R-2\delta}}  \left( w_\sigma - \phi(g(t)+\varepsilon) \right) \left( 2 \left\langle \nabla \xi_\varepsilon , \nabla \eta_\delta \right\rangle + \xi_\varepsilon \Delta \eta_\delta \right) dx dt \\
 = & \int_0^S \int_{\partial B_R}  -\left( w_\sigma - \phi(g(t)+\varepsilon) \right) \frac{\partial \xi_\varepsilon}{\partial \nu} (\sigma,t) \, d\sigma dt  \, ,
\end{aligned}
\end{equation}
recalling that $ \eta_\delta' $ is radial and the outer normal derivative coincides with the radial derivative. Since $ w_\sigma \ge \phi(g(t)) $ (consequence of the assumption from below on $ \overline{u} $) and $ \tfrac{\partial \xi_\varepsilon}{\partial \nu} \le 0 $, by letting $ \delta \downarrow 0 $ in \eqref{int-pp}, \eqref{int-pp-2} and \eqref{int-pp-3} we end up with \eqref{n3ter}.

By using the Green function of $ -\Delta $ in $B_R$ with homogeneous Dirichlet boundary conditions (multiplied by a suitable constant) as an upper barrier to \eqref{n9}, it is not difficult to deduce the estimate
\begin{equation}\label{est-1}
\left| \frac{\partial \xi_\varepsilon}{\partial \nu} \right| \le C \, ,
\end{equation}
where $ C $ is a positive constant depending only on $\| \omega \|_\infty $ and the aforementioned Green function in $B_R$ (in particular independent of $ \varepsilon $). Furthermore, if we multiply the differential equation in \eqref{n9} by $ \Delta \xi_\varepsilon $ and integrate by parts, we easily obtain the identity
\begin{equation}\label{est-2}
\frac12 \int_{B_R} \left| \nabla \xi_\varepsilon (x,S) \right|^2 dx + \int_0^S \int_{B_R} \tilde{a}_\varepsilon \left| \Delta \xi_\varepsilon \right|^2 dx dt = \frac12 \int_{B_R} \left| \nabla \omega \right|^2 dx  =: C' \, .
\end{equation}
Upon collecting \eqref{n48-bis}, \eqref{n3ter}, \eqref{est-1} and \eqref{est-2}, we end up with
$$
\begin{aligned}
& C \int_0^S \int_{\partial B_R} \left[ \phi(g(t)) - \phi(g(t)+\varepsilon) \right] d\sigma dt - \left\| \overline{u} - u_R^\varepsilon \right\|_\infty \sqrt{\varepsilon C'}    \\
\le & \int_{B_R} \left[ \overline{u}(x,S) - u_R^\varepsilon(x,S) \right] \omega(x) \, dx + \varepsilon \left\| \omega \right\|_\infty V_R \, .
\end{aligned}
$$
By letting $ \varepsilon \downarrow 0 $ in the above inequality, we finally infer that
$$
0 \le \int_{B_R} \left[ \overline{u}(x,S) - u_R(x,S) \right] \omega(x) \, dx \, ,
$$
whence $ \overline{u} \ge u_R $ in $ B_R \times (0,T) $ given the arbitrariness of $ S $ and $ \omega $.

\end{proof}

There hold similar comparison results for nonnegative, bounded solutions to the Dirichlet problem
\begin{equation}\label{pb-balls-app}
\begin{cases}
\Delta W_R = \phi^{-1} \! \left( W_R \right)  & \text{in } B_R \, , \\
W_R = h  &  \text{on } \partial B_R \, , \\
\end{cases}
\end{equation}
where $ h $ is a nonnegative constant. In this case, since $ \phi^{-1} $ is a $ C^1(\mathbb{R}^+) $ function, for simplicity we limit ourselves to dealing with classical solutions (in fact as above we only need $ W_R $ to have a well-defined trace on $ \partial B_R $ in the Lebesgue sense).

\begin{pro}[Comparison for elliptic problems]\label{CE}
Let $ W_R $ be a classical solution of \eqref{pb-balls-app} and $ \overline{W} $ [$ \underline{W} $] be a distributional supersolution [subsolution] to \eqref{elliptic-phi}, in the sense of Definition \ref{vw}. Suppose that $ \overline{W} \ge h $ [$ \underline{W} \le h $] a.e.~in $ B_R $. Then $ \overline{W} \ge W_R $ [$ \underline{W} \le W_R $] a.e.~in $ B_R $.
\end{pro}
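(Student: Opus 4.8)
The plan is to mimic the dual-function argument of Proposition~\ref{CP}, which simplifies considerably in the absence of a time variable. I treat only the supersolution case; the subsolution case follows by reversing all inequalities and using $\underline{W}\le h$. First I would linearise the difference $\overline{W}-W_R$: set
$$ b(x):=\frac{\phi^{-1}(\overline{W}(x))-\phi^{-1}(W_R(x))}{\overline{W}(x)-W_R(x)}\quad\text{on }\{\overline{W}\ne W_R\},\qquad b(x):=0\quad\text{otherwise}. $$
Since $\phi^{-1}$ is increasing we have $b\ge0$, and since $\phi^{-1}\in C^1(\mathbb{R}^+)$ while $\overline{W},W_R$ are bounded, $b$ is bounded on $B_R$. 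Because $W_R$ is a classical (hence strong) solution of \eqref{pb-balls-app} and $\overline{W}$ a distributional supersolution of \eqref{elliptic-phi}, subtracting the two weak formulations tested against a nonnegative $\eta\in C^2_c(B_R)$ yields
$$ \int_{B_R}(\overline{W}-W_R)\,\Delta\eta\,dx\ \le\ \int_{B_R} b\,(\overline{W}-W_R)\,\eta\,dx. $$

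Next I would fix a nonnegative $\omega\in C^\infty_c(B_R)$ and build the dual test function. As in \eqref{n48-bis}, replace $b$ by a smooth $\tilde{b}$ with $0\le\tilde{b}$, $\|\tilde{b}\|_\infty\le\|b\|_\infty+1$ and $\int_{B_R}|b-\tilde{b}|\,dx\le\varepsilon$, and let $\xi=\xi_\varepsilon$ solve $-\Delta\xi+\tilde{b}\,\xi=\omega$ in $B_R$ with $\xi=0$ on $\partial B_R$. The maximum principle for $-\Delta+\tilde{b}$ gives $0\le\xi\le C\|\omega\|_\infty$; consequently $\|\Delta\xi\|_\infty=\|\tilde{b}\,\xi-\omega\|_\infty\le C$ directly, so the Hessian-type estimate \eqref{est-2} used in the parabolic proof is not even needed here, whereas a Green-function barrier exactly as for \eqref{est-1} still gives $|\partial\xi/\partial\nu|\le C$ with $C$ independent of $\varepsilon$. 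Since $\xi$ is not compactly supported, I would plug $\eta=\eta_\delta\xi$ (with the cut-offs $\eta_\delta$ of \eqref{ciao}) into the displayed inequality, expand $\Delta(\eta_\delta\xi)=\eta_\delta\Delta\xi+2\langle\nabla\eta_\delta,\nabla\xi\rangle+\xi\,\Delta\eta_\delta$, and let $\delta\downarrow0$ following \eqref{int-pp}--\eqref{int-pp-3}: assuming, as we may, that $R$ is a Lebesgue radius for $\overline{W}$, so that $\overline{W}$ has an $L^1(\partial B_R)$ trace $\overline{w}_\sigma\ge h$ (by the hypothesis $\overline{W}\ge h$), and recalling that $W_R$ has trace $h$, this produces
$$ \int_{B_R}(\overline{W}-W_R)\big(\Delta\xi-b\,\xi\big)\,dx\ \le\ -\int_{\partial B_R}(\overline{w}_\sigma-h)\left|\frac{\partial\xi}{\partial\nu}\right|d\sigma\ \le\ 0, $$
the sign of the boundary term being favourable because $\overline{w}_\sigma\ge h$ and $\partial\xi/\partial\nu\le0$.

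Finally, using $\Delta\xi-\tilde{b}\,\xi=-\omega$ I would rewrite the left-hand side as $-\int_{B_R}(\overline{W}-W_R)\,\omega\,dx+\int_{B_R}(\overline{W}-W_R)(\tilde{b}-b)\,\xi\,dx$, so that
$$ \int_{B_R}(\overline{W}-W_R)\,\omega\,dx\ \ge\ -\|\overline{W}-W_R\|_\infty\,\|\xi\|_\infty\!\int_{B_R}|b-\tilde{b}|\,dx\ \ge\ -C\varepsilon, $$
with $C$ independent of $\varepsilon$. Letting $\varepsilon\downarrow0$, and then invoking the arbitrariness of $\omega\ge0$, gives $\overline{W}\ge W_R$ a.e.\ in $B_R$. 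I expect the only genuinely delicate step to be the $\delta\downarrow0$ passage, that is, identifying the limit of the cut-off contributions with a boundary integral carrying the correct sign; but this is essentially the computation already carried out in the proof of Proposition~\ref{CP}, and it is lighter here since there is no time integration and no composition with $\phi$ inside the Laplacian.
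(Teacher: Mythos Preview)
Your proof is correct and follows essentially the same duality/linearization approach as the paper's. You are simply more explicit about the cut-off $\eta_\delta$ passage (which the paper compresses into a single remark that the boundary term involving $\partial\eta/\partial\nu$ has the correct sign), and you exploit a simpler $L^1$ approximation of the coefficient, correctly observing that in the elliptic setting $\|\Delta\xi\|_\infty\le C$ follows directly from the equation so the weighted energy estimate \eqref{est-2} is unnecessary.
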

\begin{proof}
We shall only give a sketchy proof of the result for subsolutions, since they are more relevant to our purposes (the argument for supersolutions is in any case analogous). Similarly to the proof of Proposition \ref{CP}, first one checks that the inequality
\begin{equation}\label{eq:w-wr}
\int_{B_R} \left[ \left( \underline{W} - W_R \right) \Delta \eta - \left( \phi^{-1}(\underline{W}) - \phi^{-1}(W_R) \right) \eta \right] dx \ge 0
\end{equation}
holds for any nonnegative $ \eta \in C^2(\overline{B}_R) \cap C^1_0(B_R) $, since the boundary term involving $ \frac{\partial \eta}{\partial \nu} $ has the correct sign. Then, given any nonnegative $ \omega \in C_c^\infty(B_R) $, we solve the elliptic problem
\begin{equation*}\label{pb-balls-dual}
\begin{cases}
-\Delta \eta + c(x) \, \eta = \omega  & \text{in } B_R \, , \\
\eta = 0  &  \text{on } \partial B_R \, , \\
\end{cases}
\end{equation*}
where
\begin{equation*}\label{n4-ell}
c(x):=
\begin{cases}
\frac{\phi^{-1}(\underline{W}(x)) - \phi^{-1}(W_R(x))}{\underline{W}(x) - W_R(x)} & \text{if } \underline{W}(x) \neq W_R(x) \, , \\
0 & \text{if } \underline{W}(x) = W_R(x) \, ,
\end{cases}
\end{equation*}
is a nonnegative and bounded coefficient. Upon a routine approximation of $c$ by smooth and positive coefficients (similar to the one regarding the coefficient $ a_\varepsilon $ in the proof of Proposition \ref{CP}), if we plug such a test function in \eqref{eq:w-wr} we end up with the inequality
$$
\int_{B_R}  \left( W_R -\underline{W} \right) \omega \, dx \ge 0 \, ,
$$
whence $ W_R \ge \underline{W} $ in $B_R$ given the arbitrariness of $ \omega $.
\end{proof}

\section{Some known conditions for stochastic (in)completeness}\label{cor}

A number of known (sufficient) conditions for stochastic completeness, or incompleteness, to hold are known, see e.g.~\cite{GrigBams}. We state hereafter the consequences on the uniqueness, or nonuniquess, of solutions to the nonlinear evolution \eqref{gen-zero-bdd}, and on existence, or nonexistence, of bounded solutions to \eqref{elliptic-phi}, that follow by combining those results and Theorems \ref{main-teo}--\ref{main-teob}. As concerns uniqueness, or nonuniqueness, of solutions to \eqref{gen-zero-bdd}, these seem to be the very first results valid on general manifolds.

\begin{cor}[Stochastic incompleteness]\label{incomp}
Let $ M $ be a connected, noncompact Riemannian manifold of dimension $N \ge 2$. Assume that at least one of the following conditions holds.
\begin{enumerate}[(a)]
\item $M$ is nonparabolic, i.e.~it admits a minimal, positive Green function $G$, and $G$ satisfies $\int_{U^c}G(x,y)\,{d}y<\infty$ for some compact set $U\subset M$ and some $x\in U$. \label{green}
    \item Let $r\ge r_0$, with $ r_0 $ sufficiently large. Let $M$ have a pole $o$ and define $r=d(o,x)$, where $d(\cdot,\cdot)$ denotes Riemannian distance. Then there exists $\psi:(r_0,+\infty)\to(0,+\infty)$ such that
    \[
    \Delta r\ge(N-1)\frac{\psi'(r)}{\psi(r)}
    \]
    for all $r\ge r_0$, with
    \[
    \int^\infty_{r_0}\frac1{\psi^{N-1}(r)}\int_{r_0}^r\psi^{N-1}(s)\,{d}s \, dr < \infty \, .
    \]\label{laplno}
\item $M$ has a pole $o$ and the sectional curvature $\mathrm{Sec}_\omega$ w.r.t.~planes containing the radial direction w.r.t.~$o$ satisfies
\[
\mathrm{Sec}_\omega(x)\le -\frac{\psi''(r)}{\psi(r)} \qquad \mathrm{with}\ \, \int^\infty_{0}\frac1{\psi^{N-1}(r)}\int_{0}^r\psi^{N-1}(s)\,{d}s \, dr < +\infty \, ,
\]
where $\psi:[0,+\infty)\to[0,+\infty)$ is strictly positive for $r>0$ with $\psi(0)=0$ and $\psi'(0)=1$.\label{sec}
\item Let $M$ have a pole $o$ and let $r=d(o,x)$ be large enough. There exists a positive increasing smooth function $k(r)$ on $(0,+\infty)$ s.t.~$\int^\infty_{r_0}\frac{{\rm d}r}{k(r)}<+\infty$ for a suitable $r_0>0$, $k'(r)\le Ck^2(r)$ and, again for $r$ large enough,
\[
\mathrm{Sec}_\omega(x)\le-k^2(r) \, .
\]\label{curv2}
\end{enumerate}
Then for all nonnegative bounded data $u_0$, all function $ \phi \in \mathfrak{C} $ and all $ T \in (0,+\infty] $ the Cauchy problem \eqref{gen-zero-bdd} admits at least two nonnegative solutions in $ L^\infty(M\times(0,T)) $. Besides, for all functions $ \phi \in \mathfrak{C} $ the semilinear elliptic equation \eqref{elliptic-phi} admits a nonnegative, nontrivial, bounded solution.

\end{cor}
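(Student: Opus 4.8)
The plan is to split the proof into a trivial reduction and a verification step. For the reduction, suppose $M$ has been shown to be \emph{stochastically incomplete}; then the implications $(\ref{a})\Rightarrow(\ref{b})$ and $(\ref{a})\Rightarrow(\ref{c})$ of Theorem~\ref{main-teo}, together with the arbitrariness of $\phi$, $T$ and $u_0$ there, immediately yield both conclusions of the Corollary — namely, for every $\phi\in\mathfrak{C}$, every $T\in(0,+\infty]$ and every nonnegative $u_0\in L^\infty(M)$, nonuniqueness of bounded nonnegative solutions to~\eqref{gen-zero-bdd} and existence of a nonnegative, nontrivial, bounded solution to~\eqref{elliptic-phi}. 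This is precisely what is recorded in Corollary~\ref{cor1}. Thus the entire task is to check that each of $(\ref{green})$–$(\ref{curv2})$ forces stochastic incompleteness, which is a matter of quoting the classical literature.

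For $(\ref{green})$ I would argue as follows. Nonparabolicity provides the minimal positive Green function $G$; since $U$ is compact and $x\in U$, the local singularity of $G(x,\cdot)$ is integrable, so the assumption $\int_{U^c}G(x,y)\,dy<\infty$ is equivalent to $w(x):=\int_M G(x,y)\,dy<\infty$, i.e.\ to the finiteness of the mean exit time from $M$; by superharmonicity of $w$ and the standard dichotomy, $w$ is then finite and locally bounded everywhere. The function $z:=e^{-w}$ satisfies $0<z\le1$, is nonconstant (since $\Delta w=-1$), and a direct computation gives $\Delta z=z\,(1+|\nabla w|^2)\ge z$; hence $z$ is a bounded positive subsolution to $\Delta v=v$ in $M$, which via the monotone approximation scheme used at the end of the proof of $(\ref{b})\Rightarrow(\ref{c})$ is completed to a bounded positive nonconstant solution of $\Delta v=v$. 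By \cite[Theorem~8.18]{GrigHK} this means that $M$ is stochastically incomplete; the underlying criterion ``finite mean exit time $\Rightarrow$ stochastic incompleteness'' is in any case classical, cf.\ \cite{GrigBams, GrigHK}.

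Conditions $(\ref{laplno})$, $(\ref{sec})$ and $(\ref{curv2})$ all presuppose a pole $o$, which makes the radial model-comparison machinery available. Condition $(\ref{laplno})$ is the Laplacian-comparison formulation: the bound $\Delta r\ge(N-1)\psi'(r)/\psi(r)$ lets one compare $M$ from below with the rotationally symmetric model of warping profile $\psi$, on which convergence of the integral $\int^\infty\psi^{1-N}(r)\int_{r_0}^r\psi^{N-1}(s)\,ds\,dr$ is exactly the classical model criterion for stochastic incompleteness (one builds a bounded nonconstant radial solution of $\Delta v=\lambda v$ by solving the associated ODE and transplants it by comparison). Condition $(\ref{sec})$ is the same statement with the hypothesis phrased through the radial sectional curvature, the passage from $\mathrm{Sec}_\omega\le-\psi''/\psi$ to the needed Laplacian bound being the Hessian/Sturm comparison theorem. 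Condition $(\ref{curv2})$ is the special case driven by the function $k(r)$, the extra requirements $\int^\infty dr/k(r)<\infty$ and $k'\le Ck^2$ being exactly those guaranteeing that the comparison model explodes. All of this is well documented; see e.g.\ \cite{GrigBams, GrigHK, M, M2, IM, H, I, I1} for the model computations and the curvature tests. As before, once stochastic incompleteness is established Theorem~\ref{main-teo} closes the argument.

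The main obstacle is not analytic but bibliographic: one has to match each hypothesis with a reference whose conclusion is literally stochastic incompleteness — rather than one of its numerous equivalent forms (failure of $L^\infty$-uniqueness for the heat equation, existence of a bounded nonconstant $\lambda$-harmonic function, non-conservativeness of the heat semigroup, and so on) — and to confirm that the auxiliary assumptions used there (existence and regularity of the pole, smoothness of $\psi$ or $k$, the threshold $r_0$) coincide with those in the statement. Beyond this there is no new difficulty: the substance is entirely in Theorem~\ref{main-teo} and in the classical theory of stochastic completeness.
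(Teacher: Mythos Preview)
Your proposal is correct and follows the same logical skeleton as the paper: reduce via Theorem~\ref{main-teo} (equivalently Corollary~\ref{cor1}) to showing that each of $(\ref{green})$--$(\ref{curv2})$ implies stochastic incompleteness, and then appeal to the classical literature for that step. The paper's own proof is in fact even terser than yours: it simply records that $(\ref{green})$--$(\ref{curv2})$ are, respectively, Corollary~6.7, Corollary~15.2(d), Theorem~15.3(d) and Theorem~15.4(b) of \cite{GrigBams}, without reproducing any of the mechanisms (mean exit time, model comparison, Hessian/Sturm comparison) that you sketch.
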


\smallskip

\begin{cor}[Stochastic completeness]\label{compl}
Let $ M $ be a connected, noncompact Riemannian manifold of dimension $N \ge 2$. Suppose that at least one of the following conditions holds.
\begin{enumerate}[(a)]\setcounter{enumi}{4}
\item $M$ is parabolic.\label{parabolic}
\item For some $o\in M$ the function $ r \mapsto \frac{r}{\log V(o,r)}$ is not integrable at infinity, where $V(o,r)$ is the volume of the geodesic ball of radius $r$ centered at $o$. Note that this holds in particular if $V(o,r)\le Ce^{ar^2}$ for suitable $C,a>0$.\label{vol}
    \item Let $r\ge r_0$, with $ r_0 $ sufficiently large. Let $M$ have a pole $o$ and define $r=d(o,x)$, where $d(\cdot,\cdot)$ denotes Riemannian distance. Then there exists $\psi:(r_0,+\infty)\to(0,+\infty)$ such that
    \[
    \Delta r\le(N-1)\frac{\psi'(r)}{\psi(r)}
    \]
    for all $r\ge r_0$, with
    \[
    \int^\infty_{r_0}\frac1{\psi^{N-1}(r)}\int_{r_0}^r\psi^{N-1}(s)\,{ d}s \, dr =+\infty \, .
    \]\label{lapl}
\item $M$ has a pole $o$ and the Ricci curvature $\mathrm{Ric}_o$ in the radial direction w.r.t.~$o$ satisfies
\[
\mathrm{Ric}_o(x)\ge -(N-1)\frac{\psi''(r)}{\psi(r)} \qquad \mathrm{with }\ \, \int^\infty_{0}\frac1{\psi^{N-1}(r)}\int_{0}^r\psi^{N-1}(s)\,{ d}s \, dr =\infty \, ,
\]
where $\psi:[0,+\infty)\to[0,+\infty)$ is strictly positive for $r>0$ with $\psi(0)=0$ and $\psi'(0)=1$.\label{ric}
\item Let $M$ have a pole $o$ and let $r=d(o,x)$ be large enough. There exists a positive increasing smooth function $k(r)$ on $(0,+\infty)$ s.t.~$\int^\infty_{r_0}\frac{{\rm d}r}{k(r)}=+\infty$ for a suitable $r_0>0$ and, again for $r$ large enough,
\[
\mathrm{Ric}_o(x)\ge-(N-1)k^2(r) \, .
\]\label{curv}
    \end{enumerate}
Then for all nonnegative bounded data $u_0$, all function $ \phi \in \mathfrak{C} $ and all $T>0$ the Cauchy problem \eqref{gen-zero-bdd} admits a unique nonnegative solution in $ L^\infty(M\times(0,T)) $. Besides, for all functions $ \phi \in \mathfrak{C} $ the semilinear elliptic equation \eqref{elliptic-phi} does not admit any nonnegative, nontrivial, bounded solution.

\end{cor}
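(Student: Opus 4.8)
The plan is to reduce everything to Theorem~\ref{main-teob}. That theorem identifies stochastic completeness of $M$ with both the uniqueness of nonnegative $L^\infty(M\times(0,T))$ solutions to \eqref{gen-zero-bdd} and the nonexistence of nonnegative, nontrivial, bounded solutions to \eqref{elliptic-phi}, for an arbitrary $\phi\in\mathfrak C$, an arbitrary $T\in(0,+\infty]$ and an arbitrary nonnegative $u_0\in L^\infty(M)$. Hence it suffices to check that each of the hypotheses \eqref{parabolic}--\eqref{curv} forces $M$ to be stochastically complete; all these implications are classical, and the work consists only in quoting and organizing the relevant literature (see the surveys \cite{Grig, GrigBams, GrigHK}).

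I would begin with \eqref{parabolic}: a parabolic manifold --- equivalently, one on which Brownian motion is recurrent --- is always stochastically complete, since recurrence forbids explosion of the paths; analytically, on a parabolic manifold every nonnegative, bounded solution of $\Delta v=\lambda v$ with $\lambda>0$ must vanish, which is exactly the characterization of stochastic completeness used in Theorem~\ref{main-teob}. Next, \eqref{vol} is precisely Grigor'yan's volume-growth test: non-integrability at infinity of $r\mapsto r/\log V(o,r)$ implies stochastic completeness, and the sufficient condition $V(o,r)\le C e^{a r^2}$ is the familiar Gaussian-type volume bound that falls under it.

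For \eqref{lapl} I would invoke the Khas'minskii-type test on manifolds with a pole: divergence of $\int^\infty \psi^{-(N-1)}(r)\int_{r_0}^r \psi^{N-1}(s)\,ds\,dr$ is exactly the condition under which the rotationally symmetric model with warping function $\psi$ is stochastically complete, and the bound $\Delta r\le (N-1)\psi'(r)/\psi(r)$ lets one transplant the radial function realizing the Khas'minskii test on the model --- an unbounded $\gamma\circ r$ with $\gamma'\ge0$ and $\Delta(\gamma\circ r)\le\gamma\circ r$ outside a compact set --- into a Khas'minskii function on $M$, which proves that $\Delta v=v$ admits no nonnegative, nontrivial, bounded solution and hence that $M$ is stochastically complete. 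Then \eqref{ric} follows from \eqref{lapl} through the Laplacian comparison theorem: a radial lower bound $\mathrm{Ric}_o\ge -(N-1)\psi''/\psi$ with $\psi(0)=0$, $\psi'(0)=1$ yields $\Delta r\le (N-1)\psi'(r)/\psi(r)$ on $M\setminus\{o\}$, while the integral test is the same. Finally \eqref{curv} is the special case of \eqref{ric} associated with a warping function $\psi$ satisfying $\psi''/\psi=k^2$ for large $r$: one takes $\psi$ solving $\psi''=k^2\psi$, notes that $\psi^{N-1}$ behaves like $\exp\!\big((N-1)\!\int^r k\big)$, and checks that $\int^\infty \psi^{-(N-1)}(r)\int_{r_0}^r\psi^{N-1}(s)\,ds\,dr$ is comparable to a multiple of $\int^\infty \mathrm{d}r/k(r)=+\infty$, the discrepancy being controlled by the assumption $k'(r)\le Ck^2(r)$.

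With $M$ stochastically complete in each case, Theorem~\ref{main-teob} applied with $\phi$, $T$, $u_0$ arbitrary delivers both assertions simultaneously: uniqueness of the nonnegative $L^\infty$ solution to \eqref{gen-zero-bdd} and nonexistence of nonnegative, nontrivial, bounded solutions to \eqref{elliptic-phi}. The only genuinely delicate point is the reduction of the curvature hypotheses \eqref{ric}, \eqref{curv} to the Laplacian hypothesis \eqref{lapl}: this uses in an essential way that $o$ is a pole, so that $r=d(o,\cdot)$ is smooth on all of $M\setminus\{o\}$ and comparison geometry is globally valid, and it requires choosing the model function $\psi$ so that the normalization at the origin and the integral divergence test hold at the same time.
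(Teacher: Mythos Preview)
Your approach is essentially the same as the paper's: verify that each of \eqref{parabolic}--\eqref{curv} is a known sufficient condition for stochastic completeness, then invoke Theorem~\ref{main-teob}. The paper's own proof is even more terse --- it simply cites, for each item, the precise result in Grigor'yan's survey \cite{GrigBams} (Corollary~6.4, Theorem~9.1, Corollary~15.2(c), Theorem~15.3(c), Theorem~15.4(a), respectively) and stops there, without the heuristic explanations you supply.

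One small slip: in your discussion of \eqref{curv} you write that ``the discrepancy being controlled by the assumption $k'(r)\le Ck^2(r)$''. That hypothesis is \emph{not} part of condition~\eqref{curv}; it appears only in the companion incompleteness criterion (Corollary~\ref{incomp}\eqref{curv2}). For stochastic completeness under \eqref{curv} no such control on $k'$ is needed --- the result is Theorem~15.4(a) in \cite{GrigBams}, and your attempted reduction to \eqref{ric} via $\psi''=k^2\psi$ would have to be carried out without that extra bound. Since the paper treats this purely as a citation, the safest fix is to do the same rather than sketch a derivation that relies on a hypothesis you do not have.
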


Note that the conditions in items $(b)$--$(d)$ and $(g)$--$(i)$ amount qualitatively to requiring that (suitable) curvatures tend to minus infinity faster than quadratically at infinity, or not more than quadratically, respectively. Moreover, we point out that the integral conditions in $(c)$ and $(h)$ are \emph{sharp} for model manifolds.
\medskip

\noindent \bf Proofs of Corollaries \ref{incomp} and \ref{compl}.\rm \ The proofs consist in noting that the stated conditions are known to imply stochastic incompleteness, or completeness, respectively. We shall provide references to the statements given in \cite{GrigBams}. In that paper one can also find detailed references to the papers in which some of the results have been proved originally. In fact we see that stochastic \it incompleteness \rm holds under any of the conditions $(a)$--$(d)$ by Corollary 6.7, Corollary 15.2(d), Theorem 15.3(d), Theorem 15.4(b) of \cite{GrigBams}, respectively. Besides, stochastic \it completeness \rm holds under any of the conditions $(e)$--$(i)$ by Corollary 6.4, Theorem 9.1, Corollary 15.2(c), Theorem 15.3(c), Theorem 15.4(a) of \cite{GrigBams}, respectively. \hfill$\qed$

\medskip

\noindent {\sc Acknowledgment.} We thank the anonimous referee for his/her careful reading of our manuscript and useful comments.  K.I.~was partially supported by the Grant-in-Aid for Scientific Research (S) (No. 19H05599)
from Japan Society for the Promotion of Science. G.G.~was partially supported by the PRIN Project {``Equazioni alle derivate parziali di tipo ellittico e parabolico: aspetti geometrici, disuguaglianze collegate, e applicazioni''} (Italy). M.M.~was partially supported by the GNAMPA Project ``Equazioni diffusive non-lineari in contesti non-Euclidei e disuguaglianze funzionali associate'' (Italy). Both G.G.~and M.M.~have also been supported by the GNAMPA group of the Istituto Nazionale di Alta Matematica (INdAM, Italy). M.M.~is grateful to the Tohoku University at Sendai (Japan) for the kind hospitality, since part of this project was carried out during his visit.

\end{document}